\pgfplotsset{compat=newest}
\newtheorem{thm}{Theorem}[section]
\newtheorem{lem}[thm]{Lemma}
\newtheorem{prop}[thm]{Proposition}
\newtheorem{cor}{Corollary}
\newtheorem{claim}{Claim}
\theoremstyle{definition}
\newtheorem{defn}{Definition}
\newtheorem{conj}{Conjecture}
\newtheorem{eg}{Example}
\theoremstyle{remark}
\newcommand{\bC}{\mathbb C}
\newcommand{\bR}{\mathbb R}
\newcommand{\bP}{\mathbb P}
\newcommand{\bN}{\mathbb N}
\newcommand{\bE}{\mathbb E}
\newcommand{\cR}{\mathcal R}
\newcommand{\ZZ}{\mathbb Z}
\newcommand{\ul}{\underline}
\newcommand{\Sp}{S^{p-1}}
\newcommand{\Rnon}{\mathbb{R}_{\geq 0}}
\newcommand{\ener}{\mathcal E_{n,p}}
\newcommand{\Enp}{E_{n,p}}
\newcommand{\Unif}{\operatorname{Unif}}
\newcommand\pgfmathsinandcos[3]{%
	\pgfmathsetmacro#1{sin(#3)}%
	\pgfmathsetmacro#2{cos(#3)}%
}
\newcommand\LongitudePlane[3][current plane]{%
	\pgfmathsinandcos\sinEl\cosEl{#2} 
	\pgfmathsinandcos\sint\cost{#3} 
	\tikzset{#1/.style={cm={\cost,\sint*\sinEl,0,\cosEl,(0,0)}}}
}
\newcommand\LatitudePlane[3][current plane]{%
	\pgfmathsinandcos\sinEl\cosEl{#2} 
	\pgfmathsinandcos\sint\cost{#3} 
	\pgfmathsetmacro\yshift{\cosEl*\sint}
	\tikzset{#1/.style={cm={\cost,0,0,\cost*\sinEl,(0,\yshift)}}} %
}
\newcommand\DrawLongitudeCircle[2][1]{
	\LongitudePlane{\angEl}{#2}
	\tikzset{current plane/.prefix style={scale=#1}}
	\pgfmathsetmacro\angVis{atan(sin(#2)*cos(\angEl)/sin(\angEl))} %
	\draw[current plane] (\angVis:1) arc (\angVis:\angVis+180:1);
	\draw[current plane,dashed] (\angVis-180:1) arc (\angVis-180:\angVis:1);
}
\newcommand\DrawLatitudeCircle[2][2]{
	\LatitudePlane{\angEl}{#2}
	\tikzset{current plane/.prefix style={scale=#1}}
	\pgfmathsetmacro\sinVis{sin(#2)/cos(#2)*sin(\angEl)/cos(\angEl)}
	\pgfmathsetmacro\angVis{asin(min(1,max(\sinVis,-1)))}
	\draw[current plane] (\angVis:1) arc (\angVis:-\angVis-180:1);
	\draw[current plane,dashed] (180-\angVis:1) arc (180-\angVis:\angVis:1);
}
\DeclareMathOperator*{\argmax}{arg\,max}
\DeclareMathOperator*{\argmin}{arg\,min}
\DeclareMathOperator*{\spn}{span}
\numberwithin{equation}{section}
\begin{document}

\title{On the Minimax Spherical Designs}

\author{Weibo Fu}
\address{Department of Mathematics, Princeton University, Princeton, NJ  08544}
\email{wfu@math.princeton.edu}

\author{Guanyang Wang}
\address{Department of Statistics, Hill Center, Rutgers University, 110 Frelinghuysen Road, Piscataway, NJ 08854}
\email{guanyang.wang@rutgers.edu}

\author{Jun Yan}
\address{New York, NY 10036}
\email{yanjun9306@gmail.com}


\date{Weibo Fu is a Ph.D. student in Department of Mathematics, Princeton University, email: \href{wfu@math.princeton.edu}{wfu@math.princeton.edu}. Guanyang Wang is a tenure-track assistant professor in Department of Statistics, Rutgers University, email \href{guanyan.wang@rutgers.edu}{guanyan.wang@rutgers.edu}. Jun Yan has email: \href{yanjun9306@gmail.com}{yanjun9306@gmail.com}}.


\keywords{minimax spherical design, minimal energy, discrepancy}

\begin{abstract}
Distributing points on a (possibly high-dimensional) sphere  with minimal energy  is a long-standing problem in and outside the field of mathematics.
This paper considers a novel energy function that arises naturally from statistics and combinatorial optimization, and studies its theoretical properties.
Our result solves both the exact optimal spherical point configurations in certain cases and the minimal energy asymptotics under general assumptions. Connections between our results and the L1-Principal Component analysis and Quasi-Monte Carlo methods are also discussed. 
\end{abstract}

\maketitle

\section{Introduction}\label{sec: intro}
The problem of distributing  points  on a sphere with minimal energy has attracted much interest in various branches of science. Mathematically, let $p$ be a positive integer. We denote by $S^{p-1}= \left\{v \in \bR^p : \|v\| =1\right\}$  the unit sphere in $\bR^p$, where $\|\cdot\|$ stands for the standard Euclidean norm. For each positive integer $n$ and a predefined energy function $E_{n,p}: (\Sp)^n \rightarrow \Rnon$, we are interested in finding the minimal energy
\begin{equation}\label{eqn: minimal energy}
\ener := \inf_{\ul u  \in (\Sp)^n} E_{n,p}(\ul u)
\end{equation}
where $\ul u = (u_1, \cdots, u_n)$ is a set of $n$ points on the unit sphere, and the corresponding optimal configurations (i.e., minimizers of the above energy function)
\begin{equation}\label{eqn: optimal configuration}
\ul u^\star := \argmin_{\ul u\in (\Sp)^n} E_{n,p}(\ul u).
\end{equation}

The minimal energy, unsurprisingly, depends on the energy function $E_{n,p}$. Finding the minimal energy and the corresponding optimal configurations is a fundamental problem in extremal geometry.  In the existing literature, the energy function usually takes the form $\sum\limits_{i\neq j} f(\lVert u_i - u_j \rVert)$ where $f: \Rnon \rightarrow \Rnon$ is a decreasing function. For example, on the unit $2$-sphere ($p = 3$), the problem is known as the Smale's seventh problem \cite{smale1998mathematical} when $f(x) = 1/\log x$, the Thomson problem \cite{thomson1904xxiv} when $f(x) =1/x$, the generalized Thomson problem  or Riesz energy problem when $f(x) = 1/x^\alpha$ for some $\alpha> 0$.  Moreover, the problem is known as the Tammes problem \cite{tammes1930origin} if $E_{n,p}(\ul u) :=1/ \min\lVert u_i - u_j\rVert$. For the general $p$-sphere, the optimal configurations are naturally connected with the well-known spherical design problem \cite{delsarte1991spherical}. The mentioned problems are interconnected with each other, but also exciting fields independently, attracting many  researchers. Taking the Thomson problem as an example, the exact optimal configurations for $S^2$ have only been solved for  $n\leq 6$ and $n = 12$, where the case $n = 5$ is solved using a sophisticated computer-assisted proof \cite{schwartz2013five}. The asymptotics  for the minimal energy of the generalized Thomson problem under different regimes are derived in  \cite{wagner1990means} \cite{wagner1992means} \cite{kuijlaars1998asymptotics}.  We also refer the interested readers to  \cite{cohn2007universally}, \cite{katanforoush2003distributing} \cite{saff1997distributing} and the references therein for other  related results.

In this paper, we consider  a new energy function defined as
\begin{equation}\label{eqn: max energy}
\Enp(\ul u) := \max_{v\in S^{p-1}} \sum_{i=1}^n \lvert u_i \cdot v\rvert.
\end{equation}
The optimal configurations which minimize (\ref{eqn: max energy}) are called the \textit{minimax spherical designs} as it can be written as:
\begin{equation}\label{eqn: minimax design}
\ul u^\star := \argmin_{\ul u\in (\Sp)^n} \max_{v\in S^{p-1}} \sum_{i=1}^n \lvert u_i \cdot v\rvert.
\end{equation}

The minimax spherical design is also related to the traditional  $t$-designs for spheres \cite{delsarte1991spherical} and projective spaces \cite{hoggar1982t}. However, their math formulations  are different from our setup. In $t$-designs, the fixed parameter $t$ stands for the degree of polynomials. A $t$-design is a collection of points $X$ on the space of interest such that the integration over polynomials with degree no larger than $t$ matchs their averages on $X$. 	In our case, the fixed parameter $n$ is the number of points on the sphere, and we look for $n$ points that minimizes the energy function as described in \eqref{eqn: minimax design}.

One can observe that the new energy function (\ref{eqn: max energy}) is invariant under the elementwise-reflection over the origin, that is, $\Enp(u_1,\cdots,u_n) = \Enp(s_1u_1,\cdots, s_n u_n)$ where $(s_1,\cdots, s_n)$ is an arbitrary vector in $\{-1,1\}^n$. Therefore, minimizing  (\ref{eqn: max energy}) over $n$ vectors on the $p-1$-sphere is equivalent to minimizing (\ref{eqn: max energy}) over the upper hemisphere. Therefore the minimax design can also be viewed as a way of distributing points evenly on a hemisphere, or equivalently the real projective space $\mathbb{RP}^{p-1}$. As we will see  later, this new energy functional arises naturally and has applications in combinatorial optimization,  L1-Principal Component analysis (L1-PCA) and quasi-Monte Carlo.  Moreover, as we will see  in Lemma \ref{lem: discrete optimization}, finding the minimal energy \eqref{eqn: max energy} is equivalent to the combinatorial optimization problem \eqref{eqn:minimal energy, alternative form}. Formula \ref{eqn:minimal energy, alternative form} shares many similarities with the $L^2$ or spherical discrepancy \cite{alon2016probabilistic, chazelle2001discrepancy}, and therefore our techniques may be of independent interest.

In this paper we consider both the exact optimal configurations under certain circumstances and the asymptotics of the minimal energy (\ref{eqn: max energy}) under general assumptions. Our results are briefly summarized and discussed below:

\begin{itemize}
	\item We derive the  exact minimax spherical deisgns and the corresponding minimal energy in the following three cases:  
	\begin{enumerate}
		\item
\textit{Case 1: $ p \geq n$}, the minimax design is the set of $n$ mutually orthogonal vectors  with $\mathcal E_{n,p} = \sqrt{n}$.
\item \textit{Case 2: $p = 2, n$ arbitrary}, the minimax design is  the evenly spaced points on the upper semi-circle with $\mathcal{E}_{n,2} = \sin^{-1}(\frac{\pi}{2n})$ (see also Figure \ref{fig: 2d spherical design} for  illustration of the case $n = 5$).
\item \textit{Case 3: $p = 3, n = 4$}, the minimax design is of the  so-called triangular pyramid type (see Definition \ref{def: triangular pryamid}) with $\mathcal{E}_{4,3} = \sqrt {5}$. See also  Figure \ref{fig: 3d spherical design} for illustrations.
	\end{enumerate}
	We want to point out that Case 1 and Case 2  are essentially known (see, for example, \cite{yu2020optimal}) but under different notions.  Case 3  and has not been studied before which is more interesting and complicated.  As side products, we  characterized the local minimas of the energy function (\ref{eqn: max energy}) and therefore obtained all the local minimas when $p = 3, n =4$. For the sake of completeness, we give independent proofs for all the three cases.
	\item We  derive the asymptotics of the minimal energy $\ener$ defined in (\ref{eqn: minimal energy}) and construct the asymptotically minimax designs. To be more precise, we prove:
	\begin{enumerate}
		\item  When $p$ is  arbitrarily fixed and $n\rightarrow \infty$, we have
		\begin{equation}\label{eqn: asymptotics, i}
				\frac{\Gamma(p/2)}{\sqrt{\pi}   \Gamma((p+1)/2)}  \leq 	\frac{\ener}{n} \leq \frac{\Gamma(p/2)}{\sqrt{\pi}   \Gamma((p+1)/2)}  + C_pn^{-\frac 1p}.
		\end{equation}
		In other words, $\ener \sim  \frac{\Gamma(p/2)}{\sqrt{\pi}   \Gamma((p+1)/2)}  n$. 
		\item When $n,p$ are two arbitrary  positive integers with $n > p$, we have $\ener = \Theta(n/\sqrt p)$. More precisely:
			\begin{equation}\label{eqn: asymptotics, ii}
	\sqrt \frac{2}{\pi} \cdot\frac{n} {\sqrt {p+1}}< \ener < \frac{\sqrt 5}{2}\cdot \frac{n}{\sqrt p}.
		\end{equation}
	\end{enumerate}
	Moreover, we construct the configurations that attain the minimal energy asymptotically using the sphere's area-regular partitions. We further conjecture the quantity $\ener/n$ which represents the average energy  is decreasing with $n$ when $p$ is fixed, but we do not know how to prove it. 
\end{itemize}

\begin{center}
	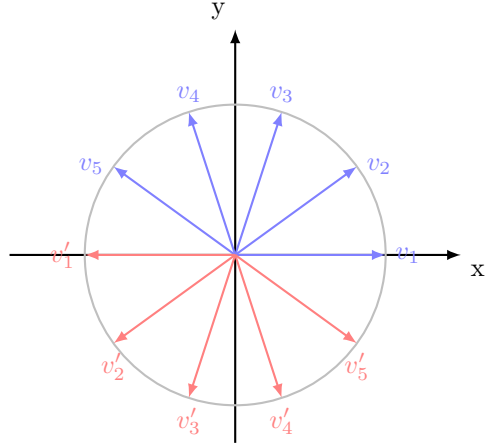
\begin{figure}[htbp]
		
		\begin{tikzpicture}[>=latex]
		
		\draw[thick,->] (-3,0) -- (3,0) node[anchor=north west] {x};
		\draw[thick,->] (0,-2.5) -- (0,3) node[anchor=south east] {y };
		\draw[blue!50, thick,->] (0,0) coordinate (O) -- (0:2) coordinate (oa) 
		node[right] {$v_1$};
		
		\draw[blue!50, thick,->] (0,0) coordinate (O) -- (36:2) coordinate (ob) 
		node[right] {$v_2$};
		
		\draw[blue!50, thick,->] (0,0) coordinate (O) -- (72:2) coordinate (oc) 
		node[above] {$v_3$};
		
		\draw[blue!50, thick,->] (0,0) coordinate (O) -- (108:2) coordinate (od) 
		node[above] {$v_4$};
		
		\draw[blue!50, thick,->] (0,0) coordinate (O) -- (144:2) coordinate (oe) 
		node[left] {$v_5$};

		\draw[red!50, thick,->] (0,0) coordinate (O) -- (180:2) coordinate (oa) 
		node[left] {$v_1'$};
		
		\draw[red!50, thick,->] (0,0) coordinate (O) -- (216:2) coordinate (ob) 
		node[below] {$v_2'$};
		
		\draw[red!50, thick,->] (0,0) coordinate (O) -- (252:2) coordinate (oc) 
		node[below] {$v_3'$};
		
		\draw[red!50, thick,->] (0,0) coordinate (O) -- (288:2) coordinate (od) 
		node[below] {$v_4'$};
		
		\draw[red!50, thick,->] (0,0) coordinate (O) -- (324:2) coordinate (oe) 
		node[below] {$v_5'$};
		
		\draw[gray!50, thick] (O) circle (2 cm);
		\end{tikzpicture}
		
		\caption{The spherical design for $n = 5, p = 2$. The five blue vectors $\{v_1, v_2, \cdots, v_5\}$ are five evenly spaced points on the semicircle, the rest five red vectors $\{v_1', v_2', \cdots, v_5'\}$ are the antipodal points of $\{v_1, v_2, \cdots, v_5\}$. } 
		\label{fig: 2d spherical design}
	\end{figure}
	
	\begin{figure}[htbp]
		
		\begin{tikzpicture}[tdplot_main_coords, scale = 2]
		
		\coordinate (P) at ({0},{0},{1});
		\coordinate (Q) at ({1},{0},{0});
		\coordinate (R) at ({-1/2},{-sqrt(3)/2},{0});
		\coordinate (S) at ({-1/2},{sqrt(3)/2},{0});
		\coordinate (P') at ({0},{0},{-1});
		\coordinate (Q') at ({-1},{0},{0});
		\coordinate (R') at ({1/2},{sqrt(3)/2},{0});
		\coordinate (S') at ({1/2},{-sqrt(3)/2},{0});
		\shade[ball color = lightgray,
		opacity = 0.5
		] (0,0,0) circle (1cm);
		
		\tdplotsetrotatedcoords{0}{0}{0};
		\draw[dashed,
		tdplot_rotated_coords,
		gray
		] (0,0,0) circle (1);
		
		\tdplotsetrotatedcoords{90}{90}{90};
		\draw[dashed,
		tdplot_rotated_coords,
		gray
		] (1,0,0) arc (0:180:1);
		
		\tdplotsetrotatedcoords{0}{90}{90};
		\draw[dashed,
		tdplot_rotated_coords,
		gray
		] (1,0,0) arc (0:180:1);
		
		
		\draw[-stealth] (0,0,0) -- (1.80,0,0);
		\draw[-stealth] (0,0,0) -- (0,1.30,0);
		\draw[-stealth] (0,0,0) -- (0,0,1.30);
		\draw[dashed, gray] (0,0,0) -- (-1,0,0);
		\draw[dashed, gray] (0,0,0) -- (0,-1,0);
		
		\draw[dashed, -stealth, blue!50] (0,0,0) -- (P);
		\draw[dashed, -stealth, blue!50] (0,0,0) -- (Q);
		\draw[dashed, -stealth, blue!50] (0,0,0) -- (R);
		\draw[dashed, -stealth, blue!50] (0,0,0) -- (S);
			\draw[thick , blue!50] (Q) -- (P);
		\draw[thick , blue!50] (Q) -- (R);
		\draw[thick , blue!50] (Q) -- (S);
		\draw[thick , blue!50] (P) -- (S);
		\draw[thick, blue!50] (P) -- (R);
		\draw[thick, blue!50] (R) -- (S);
		
		\draw[dashed, -stealth, red!50] (0,0,0) -- (P');
		\draw[dashed, -stealth, red!50] (0,0,0) -- (Q');
		\draw[dashed, -stealth, red!50] (0,0,0) -- (R');
		\draw[dashed, -stealth, red!50] (0,0,0) -- (S');
		
		\draw[fill = lightgray!50] (P) circle (0.5pt);
		\draw[fill = lightgray!50] (Q) circle (0.5pt);
		\draw[fill = lightgray!50] (R) circle (0.5pt);
		\draw[fill = lightgray!50] (S) circle (0.5pt);
		\draw[fill = lightgray!50] (P') circle (0.5pt);
		\draw[fill = lightgray!50] (Q') circle (0.5pt);
		\draw[fill = lightgray!50] (R') circle (0.5pt);
		\draw[fill = lightgray!50] (S') circle (0.5pt);
		\end{tikzpicture}
		
		\caption{The spherical design for $n = 3, p = 4$. The four blue vertex of the triangular pyramid forms a spherical design. The four red vectors are the antipodal points.}
		\label{fig: 3d spherical design}
	\end{figure}
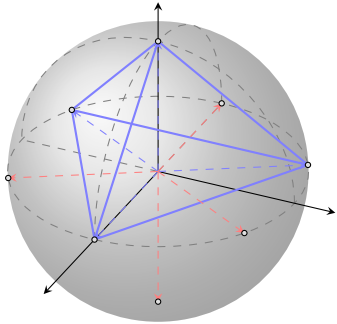	
	
\end{center}

Interestingly, the proof techniques for the exact minimax spherical designs and the asymptotics are quite different. Finding the exact minimax spherical designs relies on combinatorial methods.  For example,  the combinatorial trick given in Lemma \ref{lem: discrete optimization} reduces the problem of maximizing a function (\ref{eqn: max energy})
over a compact region into an issue of optimizing a function over a finite (but still exponentially large) set. 
Moreover, by allowing infinitesimal variations at the local minima, we are able to get extra incidence relations Lemma \ref{lem: local minimal}, and exploit them to better understand and analyze the general cases.
With additional combinatorial arguments, these incidence relations help us completely settle down the problem for $n=4, p=3$.
In contrast,  though the original problem itself is deterministic, the asymptotic results mostly rely on  probabilistic methods. The lower bound in (\ref{eqn: asymptotics, i}) is proved directly using probability arguments, and the upper bound combines  probabilistic arguments with results in area-regular partitions for a unit sphere.

The rest of this paper is organized as follows. Section \ref{sec: exact designs} solves the minimal energy and the corresponding minimax spherical designs for Case 1 - Case 3 mentioned above. Section \ref{sec: asymptotic energy} studies the asymptotic behaviors of the minimal energy $\ener$, and construct the asymptotically minimax designs. Section \ref{sec: application} discusses two applications: L1-PCA and Quasi-Monte Carlo. Several unsolved problems are discussed at the end of each section.

\section*{Acknowledgement}

The authors would like to thank Persi Diaconis, Andrea Ottolini, and Xiaoming Huo for helpful comments and discussions.

\section{Exact minimax  designs}\label{sec: exact designs}

This section focuses on solving the minimal energy and the corresponding minimax designs in the three cases mentioned in Section \ref{sec: intro}. We start with proving Lemma \ref{lem: discrete optimization} which will be useful throughout this section. Case 1 can also be viewed as an application of  Lemma \ref{lem: discrete optimization}, and is proved in Proposition \ref{prop:case1}. Case 2 and Case 3 are  proved in Proposition \ref{prop:case2} and \ref{prop: case3} separately. In particular, Case 3 is technically most complicated, and the proof relies on repeatedly using the idea and result of the key lemma -- Lemma \ref{lem: local minimal}.

For a fix set of $n$ points $\ul u  = (u_1, \cdots, u_n)$ on the unit $p$-sphere $\Sp$, finding the energy $\Enp(\ul u)$  is equivalent  to maxizing the function $g_{\ul u}(v): = \sum_{i=1}^n  \lvert u_i \cdot v\rvert$ over $\Sp$. The following lemma (also proved in \cite{borodachov2008asymptotics}) shows that the above problem is equivalent to a discrete combinatorial optimization problem.

\begin{lem}\label{lem: discrete optimization}
	With the energy function $E_{n,p}$ defined as in (\ref{eqn: max energy}), for each $\ul u  = (u_1, \cdots, u_n)\in (\Sp)^n$, we have:
	\begin{equation}\label{eqn: discrete optimization}
	\Enp(\ul u)  = \max_{v\in S^{p-1}} \sum_{i=1}^n \lvert u_i \cdot v\rvert = \max_{\delta \in \{-1,1\}^n} \lVert \sum_{i=1}^n \delta_i u_i\rVert.
	\end{equation}
\end{lem}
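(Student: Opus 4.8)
The plan is to prove the identity by rewriting the inner maximization in two stages, passing from the absolute values to a sign vector and then from the sign vector to a norm. First I would use the elementary pointwise fact that $\lvert a\rvert = \max_{\delta\in\{-1,1\}}\delta a$ for every real number $a$. Applying this to $a = u_i\cdot v$ for each $i$ and using that the maximum of a sum of terms depending on \emph{independent} variables is the sum of the maxima, I get, for any fixed $v\in\Sp$,
\[
\sum_{i=1}^n \lvert u_i\cdot v\rvert \;=\; \max_{\delta\in\{-1,1\}^n}\sum_{i=1}^n \delta_i\,(u_i\cdot v),
\]
since each $\delta_i$ may be chosen separately to match the sign of $u_i\cdot v$.

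Next I would collect the inner sum into a single inner product, $\sum_i \delta_i(u_i\cdot v) = \bigl(\sum_i \delta_i u_i\bigr)\cdot v$, and then take the maximum over $v\in\Sp$ on both sides. Because the overall quantity is being optimized over the Cartesian product $\{-1,1\}^n\times\Sp$, the two maxima may be interchanged freely:
\[
\max_{v\in\Sp}\;\max_{\delta\in\{-1,1\}^n}\Bigl(\sum_{i=1}^n \delta_i u_i\Bigr)\cdot v \;=\; \max_{\delta\in\{-1,1\}^n}\;\max_{v\in\Sp}\Bigl(\sum_{i=1}^n \delta_i u_i\Bigr)\cdot v.
\]
Finally, for each fixed $\delta$ I set $w_\delta = \sum_{i=1}^n \delta_i u_i$ and use that the maximum of a linear functional over the unit sphere equals its Euclidean norm, $\max_{v\in\Sp} w_\delta\cdot v = \lVert w_\delta\rVert$, attained at $v = w_\delta/\lVert w_\delta\rVert$. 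Substituting gives $\max_{v\in\Sp}\sum_i \lvert u_i\cdot v\rvert = \max_{\delta}\lVert w_\delta\rVert$, which is exactly \eqref{eqn: discrete optimization}.

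The argument is almost entirely formal, so there is no serious obstacle; the only steps needing a word of justification are the interchange of the two maxima (valid because optimizing over a product set is independent of the order of the factors, and both factors here are compact) and the degenerate case $w_\delta = 0$, for which the identity $\max_{v}w_\delta\cdot v = \lVert w_\delta\rVert = 0$ still holds trivially. In particular no appeal to continuity beyond the attainment of the maxima is required, and the two chains of equalities above combine directly into the stated result.
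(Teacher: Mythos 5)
Your proposal is correct and follows essentially the same route as the paper's proof: replace each absolute value by a maximum over a sign, interchange the maximum over $\delta$ with the maximum over $v$, and evaluate the maximum of the resulting linear functional over $\Sp$ as a Euclidean norm (the paper invokes Cauchy--Schwarz for this last step, which is the same computation). Your explicit remark about the degenerate case $w_\delta = 0$ is a minor addition the paper omits, but the argument is otherwise identical.
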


\begin{proof}
	\begin{align*}
	\Enp(\ul u)  &= \max_{v\in S^{p-1}} \sum_{i=1}^n \lvert u_i \cdot v\rvert =  \max_{v\in S^{p-1}} \sum_{i=1}^n \max_{\delta_i\in\{-1,1\} }   u_i \cdot (\delta_i v) \\
	& = \max_{v\in S^{p-1}} \max_{\delta \in \{-1,1\}^n} \big(\sum_{i=1}^n (\delta_i u_i) \cdot  v \big)\\
	& = \max_{\delta \in \{-1,1\}^n} \max_{v\in S^{p-1}} \big(\sum_{i=1}^n (\delta_i u_i) \cdot  v  \big) \\
	& = \max_{\delta \in \{-1,1\}^n} \lVert \sum_{i=1}^n \delta_i u_i\rVert,
	\end{align*}
	where the last equality follows from the Cauchy-Schwarz inequality. After fixing $\delta \in \{-1,1\}^n$, it is straightforward from Cauchy-Schwarz inequality that the quantity $\big(\sum_{i=1}^n (\delta_i u_i) \cdot  v  \big)$ is no larger than  $\lVert \sum_{i=1}^n \delta_i u_i\rVert \lVert v\rVert =  \lVert \sum_{i=1}^n \delta_i u_i\rVert $ and is maximized by taking 
	\[
	v = \frac{\sum_{i=1}^n \delta_i u_i}{\lVert \sum_{i=1}^n \delta_i u_i\rVert }.
	\]
\end{proof}

Lemma \ref{lem: discrete optimization} suggests, instead of searching for all the points on the sphere $\Sp$, it suffices to maximize the  vectors' Euclidean norm over a finite set of the vectors which are of the form $\sum_{i=1}^n \delta_i u_i$. Consequently, the minimal energy can be equivalently written as:
\begin{equation}\label{eqn:minimal energy, alternative form}
\ener = \min_{\substack{\ul u \in (\Sp)^n }} \max_{\substack{\delta \in \{-1, 1\}^n }} \| \sum_{i=1}^n \delta_i u_i \|.
\end{equation}

The problem for $p=1$ is trivial. For $n \leq p$, it is easy to prove the above minimal energy (\ref{eqn:minimal energy, alternative form}) equals  $\sqrt {n}$, where the equality holds if and only if all the $u_i$ are orthogonal to each other.
\begin{prop}[Case 1: $n\leq p$]\label{prop:case1}
	For any  positive integers $n,p$, we have $\ener \geq \sqrt n$, where the equality holds if and only if $n\leq p$ and  all the $u_i$ are orthogonal to each other.
\end{prop}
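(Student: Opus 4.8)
The plan is to combine Lemma~\ref{lem: discrete optimization} with a second-moment argument, averaging over uniform random signs. Fix an arbitrary configuration $\ul u \in (\Sp)^n$. By \eqref{eqn: discrete optimization} we have $\Enp(\ul u)^2 = \max_{\delta \in \{-1,1\}^n} \lVert \sum_{i=1}^n \delta_i u_i\rVert^2$, and since a maximum over a finite family dominates its average, I would lower-bound this by the expectation over $\delta$ drawn uniformly from $\{-1,1\}^n$ (i.e.\ i.i.d.\ Rademacher signs). Expanding the square and using $\bE[\delta_i\delta_j] = \mathbf{1}\{i=j\}$ together with $\lVert u_i\rVert = 1$ gives $\bE_\delta \lVert \sum_i \delta_i u_i\rVert^2 = \sum_{i,j}\bE[\delta_i\delta_j](u_i\cdot u_j) = \sum_i \lVert u_i\rVert^2 = n$. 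Hence $\Enp(\ul u)\geq \sqrt n$ for every configuration, and taking the infimum over $\ul u$ yields $\ener \geq \sqrt n$.

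For the equality case I would track exactly when the bound is tight. A maximum of a finite family equals its mean only when every member equals that common value, so $\Enp(\ul u) = \sqrt n$ forces $\lVert \sum_i \delta_i u_i\rVert^2 = n$ for \emph{all} $\delta \in \{-1,1\}^n$. Writing $\lVert \sum_i \delta_i u_i\rVert^2 = n + \sum_{i\neq j}\delta_i\delta_j(u_i\cdot u_j)$, this is equivalent to the identity $f(\delta) := \sum_{i\neq j}\delta_i\delta_j(u_i\cdot u_j) \equiv 0$. To isolate the individual inner products I would fix a pair $k\neq l$ and compute $\bE[\delta_k\delta_l f(\delta)]$; by independence of the signs the only surviving term corresponds to $\{i,j\}=\{k,l\}$, giving $2(u_k\cdot u_l)$, which must therefore vanish. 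Thus $\Enp(\ul u) = \sqrt n$ if and only if the $u_i$ are pairwise orthogonal.

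Finally, $n$ pairwise orthogonal unit vectors are linearly independent, so such a configuration exists in $\bR^p$ precisely when $n \leq p$; conversely, when $n\leq p$ any orthonormal $n$-frame satisfies $\lVert \sum_i \delta_i u_i\rVert^2 = n$ for all $\delta$ by the Pythagorean identity, attaining the bound. Together these establish $\ener = \sqrt n$ if and only if $n \leq p$, with the minimizers being exactly the orthonormal configurations. I expect the main content to lie in the equality analysis rather than the lower bound: the delicate point is converting the tightness condition ``$\max = \text{mean}$'' into the pointwise constancy of $\delta \mapsto \lVert\sum_i\delta_i u_i\rVert^2$, and then extracting pairwise orthogonality via the correlation $\bE[\delta_k\delta_l f(\delta)]$.
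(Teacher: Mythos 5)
Your proposal is correct and takes essentially the same route as the paper: the lower bound is the identical averaging identity $\sum_{\delta \in \{\pm 1\}^n} \lVert \sum_{i=1}^n \delta_i u_i\rVert^2 = n2^n$, merely phrased as a Rademacher expectation. The equality analysis differs only cosmetically — the paper flips the sign of one vector at a time and inducts, while you extract the coefficient of $\delta_k\delta_l$ by computing the correlation $\bE[\delta_k\delta_l f(\delta)]$; both arguments read off the vanishing of each cross term $u_k\cdot u_l$ from the constancy of the quadratic form on the hypercube.
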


\begin{proof}
Consider the following equality
$$\sum_{\substack{\delta \in \{\pm 1\}^n}} \| \sum_{i=1}^n \delta_i v_i \|^2 = n 2^n,$$
which is true for every $\{v_1, \cdots, v_n\}$ as we can expand the expression and cancel out all the cross-terms. 
We immediately have $n2^n \leq 2^n \max_{\delta \in \{\pm 1\}^n}  \| \sum_{i=1}^n \delta_i v_i \|^2$. Taking the mininum over $\{v_1, \cdots, v_n\}$ on the RHS yields $\ener \geq \sqrt n$.
The equality holds if and only if $\| \sum_{i=1}^n \delta_i v_i \|^2 = n$ for any $\delta_i \in \{\pm 1\}, ~ 1 \leq i \leq n$. Therefore, $$\| v_n + \sum_{i=1}^{n-1} \delta_i v_i \|^2 = \| - v_n + \sum_{i=1}^{n-1} \delta_i v_i \|^2 = n,$$ implying $v_n \perp \sum_{i=1}^{n-1} \delta_i v_i$,
which shows $v_n \perp v_i$ for all $1 \leq i \leq n-1$. We win by an easy induction.
\end{proof}

\subsection{Case 2: $p=2$, $n$ arbitrary}\label{subsec: case 2}
We now turn to Case 2. The minimal energy result depends on the following lemma. 
\begin{lem}\label{lem: cos global max}
	Let $k$ be a positive integer. 
	Fix  $\theta \geq 0$, let $A_k^\theta$ be the bounded region 
	$$A_k^\theta:=\left\{ (\theta_1,\cdots,\theta_k) \in [0,\pi]^k | \sum_{i=1}^k \theta_i = \theta, ~ \theta_i + \theta_{i+1} \leq \pi, ~ 1 \leq i \leq k -1 ~\mathrm{and}~ \theta_k+\theta_1 \leq \pi \right\}.$$
	We define a function $C$ on $A_k^\theta$
	\begin{eqnarray*}
		C : A_k^\theta & \to & \bR_{\geq 0} \\
		\ul{\theta}=(\theta_1,\cdots,\theta_k) & \mapsto & \sum\limits_{i=1}^k \cos(\theta_i).
	\end{eqnarray*}
	Suppose $A_k^\theta$ is non-empty, or equivalently $\theta \leq \frac{k\pi}{2}$, then $C$ attains its maximal value at $\ul{\theta}=(\frac{\theta}{k},\cdots,\frac{\theta}{k})$ with $C(\frac{\theta}{k},\cdots,\frac{\theta}{k})=k\cos(\frac{\theta}{k})$. 
	In particular, if $\theta < \frac{k\pi}{2}$, then the only maximal value is attained at $\ul{\theta}=(\frac{\theta}{k},\cdots,\frac{\theta}{k})$. 
\end{lem}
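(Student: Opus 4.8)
The plan is to prove the upper bound $C(\ul\theta)\le k\cos(\theta/k)$ for every $\ul\theta\in A_k^\theta$ and then observe that the uniform point attains it. The naive route — applying Jensen's inequality or a tangent-line bound directly to $\sum_i\cos\theta_i$ — fails, because $\cos$ is concave only on $[0,\pi/2]$ and convex on $[\pi/2,\pi]$, whereas an individual $\theta_i$ may be as large as $\pi$. The whole point of the adjacency constraints $\theta_i+\theta_{i+1}\le\pi$ is precisely to restore concavity: they force every \emph{adjacent pair} to have average at most $\pi/2$, which is exactly the range on which $\cos$ is strictly concave. For feasibility, note that the uniform point $(\theta/k,\dots,\theta/k)$ satisfies the adjacency constraint $2\theta/k\le\pi$ exactly when $\theta\le k\pi/2$, so it lies in $A_k^\theta$ under the hypothesis; conversely, summing the $k$ cyclic constraints gives $2\theta\le k\pi$, which recovers the stated equivalence.

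The key device is a \textbf{doubling trick}, which produces a clean pairing even when $k$ is odd. Given $\ul\theta\in A_k^\theta$, form the length-$2k$ cyclic sequence $\phi=(\theta_1,\dots,\theta_k,\theta_1,\dots,\theta_k)$; each consecutive pair of $\phi$ (wrap-arounds included) is an adjacent pair of $\ul\theta$, so all $2k$ constraints $\phi_j+\phi_{j+1}\le\pi$ hold. Grouping $\phi$ into the $k$ disjoint consecutive pairs $(\phi_{2i-1},\phi_{2i})$ and applying the identity $\cos a+\cos b=2\cos\frac{a+b}{2}\cos\frac{a-b}{2}\le 2\cos\frac{a+b}{2}$ — valid because $\frac{a+b}{2}\le\pi/2$ makes $\cos\frac{a+b}{2}\ge 0$ — gives, with $s_i:=\frac{\phi_{2i-1}+\phi_{2i}}{2}\in[0,\pi/2]$, the bound $2\,C(\ul\theta)=\sum_{j=1}^{2k}\cos\phi_j\le 2\sum_{i=1}^k\cos s_i$. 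Since $\sum_i s_i=\frac12\sum_j\phi_j=\theta$ and $\cos$ is concave on $[0,\pi/2]$, Jensen's inequality yields $\sum_{i=1}^k\cos s_i\le k\cos(\theta/k)$, hence $C(\ul\theta)\le k\cos(\theta/k)$, with equality at the uniform point.

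For the uniqueness assertion when $\theta<k\pi/2$ (so that $\theta/k<\pi/2$), I would trace both equalities. Strict concavity of $\cos$ on $[0,\pi/2]$ forces all $s_i=\theta/k$ in the Jensen step, so each $\cos s_i=\cos(\theta/k)>0$; then equality in $\cos a+\cos b\le 2\cos\frac{a+b}{2}$ with $\cos\frac{a+b}{2}>0$ forces $\phi_{2i-1}=\phi_{2i}$, and combined with $\frac{\phi_{2i-1}+\phi_{2i}}{2}=s_i=\theta/k$ this gives $\phi_{2i-1}=\phi_{2i}=\theta/k$ for every pair. Since each coordinate $\theta_j$ occurs as an entry of $\phi$, all coordinates equal $\theta/k$. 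The one point that genuinely needs care is guaranteeing an adjacency-respecting pairing for every $k$; this is exactly what the doubling trick secures, and it is also what lets the same short computation settle the inequality and its equality case uniformly in the parity of $k$.
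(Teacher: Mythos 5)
Your proof is correct, and it takes a genuinely different route from the paper's. The paper argues variationally: by compactness a maximizer $\ul\theta$ exists, and if it is not the uniform point one can locate an index $i$ with $\theta_i>\theta_{i+1}$ and $\theta_{i+1}+\theta_{i+2}<\pi$ (this requires a small combinatorial claim, with the boundary case $\theta=\tfrac{k\pi}{2}$ handled separately), then transfer a small angle $\varepsilon$ from $\theta_i$ to $\theta_{i+1}$ and check that $C$ strictly increases, a contradiction. Your argument is instead a direct global upper bound: the doubling of the cyclic sequence turns the odd-$k$ obstruction to pairing into a non-issue, the sum-to-product identity $\cos a+\cos b=2\cos\frac{a+b}{2}\cos\frac{a-b}{2}\le 2\cos\frac{a+b}{2}$ (legitimate precisely because the adjacency constraints put $\frac{a+b}{2}$ in $[0,\pi/2]$ where the cosine is nonnegative) reduces each pair to its midpoint, and Jensen on the concave range finishes; the equality trace for $\theta<\tfrac{k\pi}{2}$ is clean since $\cos(\theta/k)>0$ forces both the Jensen step and the product step to be tight, and every coordinate of $\ul\theta$ occurs in some pair of the doubled sequence. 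What your approach buys is the elimination of the compactness/existence step and of the delicate feasibility bookkeeping for the perturbation; what the paper's approach buys is methodological continuity, since the same perturb-at-an-extremum style reappears in its analysis of local minima (Lemma 2.4), whereas your argument is a one-off identity-plus-convexity computation. Both establish the full statement, including uniqueness.
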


\begin{proof}
	For easing notations, we regard indices of $\theta$ as elements in $\ZZ / k \ZZ$. In particular, $\theta_{k+1}=\theta_1$.
	When $\theta = \frac{k\pi}{2}$ and $k$ is odd,  the definition of $A_k^\theta$ forces $\theta_1 = \theta_2 = \cdots = \theta_k = \frac \pi 2$. When $\theta = \frac{k\pi}{2}$ and $k$ is even, 
	it is clear that we can take $\theta_1 = \theta_2 = \cdots =\theta_k = \frac{\pi}{2}$ which attains the maximum $C(\ul \theta)= 0$. 
	
	Now we assume $\theta < \frac{k\pi}{2}$.
	Since $A_k^\theta$ is compact, function $C$ attains its maximal at some $\ul{\theta}=(\theta_1,\cdots,\theta_k)\in A_k^\theta$.
	We claim that if $\ul{\theta} \neq (\frac{\theta}{k},\cdots,\frac{\theta}{k})$, there must exist $\theta_i$ such that $\theta_i > \theta_{i+1}$ and $\theta_{i+1} + \theta_{i+2} < \pi$. 
	To see this, pick a $\theta_i$ such that $\theta_i = \max\{\theta_1,\cdots,\theta_k\}$, $\theta_i > \theta_{i+1}$.
	If the claim is not true, then $\theta_{i+1} + \theta_{i+2} = \pi$.
	$\theta_{i+2}=\pi-\theta_{i+1} \geq \theta_i$, therefore $\theta_{i+2}=\max\{\theta_1,\cdots,\theta_k\}=\theta_i$. 
	Now replace $\theta_i$ with $\theta_{i+2}$, we get $\theta_{i+2}=\theta_{i+4}$. 
	Hence $\theta_i=\theta_{i+2p}$ for all $p \in \ZZ$.
	If $k$ is odd, then all $\theta_i$'s are equal.
	If $k$ is even, then $\ul{\theta}=(\theta_1,\pi-\theta_1,\cdots,\theta_1,\pi-\theta_1)$, $\theta=\frac{k\pi}{2}$.
	Both are against the assumption. Therefore, the claim is proved.
	
	After choosing the aforementioned index $i$, we 
	 pick a small angle $\varepsilon > 0$ such that $\theta_i - \varepsilon > \theta_{i+1} + \varepsilon$ and $(\theta_{i+1}+\varepsilon) + \theta_{i+2} < \pi$.
	 Consider a new vector $\ul{\theta'}$ which has the $i$-index $\theta_i -\varepsilon$, $i+1$-th index $\theta_{i+1}+\varepsilon$ and equals $\ul \theta$ elsewhere. Straghtforward calculation gives
	$C(\ul{\theta}')-C(\ul{\theta})=\cos(\theta_i-\varepsilon)+\cos(\theta_{i+1}+\varepsilon)-\cos(\theta_i)-\cos(\theta_{i+1}) > 0$, which contradicts the maximal assumption. Therefore, the only maximal value of $C$ is attained at $(\frac{\theta}{k},\cdots,\frac{\theta}{k})$, as desired.

\end{proof}

Now we are ready to solve the $p = 2$ case. The minimax design is the evenly spaced points on the upper semi-circle or the evenly spaced points on the unit circle after adding all the antipodal points. 

\begin{prop}(Case 2: $p = 2$, n arbitrary)\label{prop:case2}
	$$
	\mathcal E_{n,2}=\min_{\substack{v_i \in S^1 \\ 1 \leq i \leq n}} \max_{\substack{\delta \in \{\pm 1\}^n}} \| \sum_{i=1}^n \delta_i v_i \| = \sin^{-1}(\frac{\pi}{2n}).$$
	Under the standard identification $\bC \simeq \bR^2$, $E_{n,2}(v_1, \cdots, v_n) = \mathcal E_{n,2}$  if and only if $\{\pm v_1,\cdots,\pm v_n\}$ is obtained from a rotation (by a group element of $\mathrm{SO}_2(\bR)$) of $\{1,e^{\frac{\pi i}{n}},e^{\frac{2 \pi i}{n}},\cdots,e^{(2n-1)\frac{\pi i}{n}}\}$.
\end{prop}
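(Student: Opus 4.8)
The plan is to pass to the real projective line and reduce the minimax to the constrained cosine optimization of Lemma \ref{lem: cos global max}. First I would combine the elementwise reflection invariance recorded after \eqref{eqn: minimax design} with Lemma \ref{lem: discrete optimization} to write, for $v_i=(\cos\phi_i,\sin\phi_i)$ with $\phi_i\in[0,\pi)$,
\[
E_{n,2}(v_1,\dots,v_n)=\max_{\psi}\sum_{i=1}^n\lvert\cos(\phi_i-\psi)\rvert,
\]
so that the $n$ directions become $n$ points of $\mathbb{RP}^1\cong\bR/\pi\ZZ$, a circle of circumference $\pi$. After sorting, the configuration is described by its gaps $\theta_1,\dots,\theta_n\ge 0$ with $\sum_i\theta_i=\pi$, and a rotation of the configuration corresponds to a global shift of $\psi$, which I may therefore normalize.

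Next I would analyze the inner maximum over $\psi$ (equivalently, by Lemma \ref{lem: discrete optimization}, the maximum over sign patterns $\delta\in\{-1,1\}^n$). For fixed $\psi$ the optimal signs pick, for each $i$, the representative of $\pm v_i$ lying in the closed half-plane around the test direction, and the resulting relative angles $\eta_1,\dots,\eta_n\in[-\frac\pi2,\frac\pi2]$ give $\sum_i\lvert\cos(\phi_i-\psi)\rvert=\sum_i\cos\eta_i$. The breakpoints of $\psi\mapsto\sum_i\lvert\cos(\phi_i-\psi)\rvert$ occur exactly at the directions perpendicular to some $v_i$; on each smooth piece the function is a single sinusoid whose peak equals $\lVert\sum_i\delta_i v_i\rVert$, so the energy is the largest such peak, attained at a direction whose perpendicular ``cut'' falls inside one of the gaps. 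Cutting inside a gap $\theta_j$ lifts all $n$ points into one semicircle, and I would record the peak as $\sum_i\cos\eta_i$ whose relative angles have consecutive differences equal to the remaining gaps, with the semicircle restriction realized as the cyclic adjacency constraints $\eta_i+\eta_{i+1}\le\pi$ that cut out the region $A^\theta_k$ of Lemma \ref{lem: cos global max}.

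With the problem in this form I would invoke Lemma \ref{lem: cos global max} to reduce the remaining one-parameter family of gap vectors (subject to $\sum_i\theta_i=\pi$) to the equal-gap configuration $\theta_1=\dots=\theta_n=\pi/n$, i.e.\ the evenly spaced points, and then evaluate the resulting cosine sum in closed form by a geometric-sum (Dirichlet-kernel) computation to obtain $E_{n,2}=1/\sin(\pi/2n)$, which is $\sin^{-1}(\frac{\pi}{2n})$ in the paper's reciprocal notation. Finally, the strict clause of Lemma \ref{lem: cos global max} forces all gaps to be equal at any minimizer; undoing the reflection reduction then yields that the $2n$ points $\{\pm v_1,\dots,\pm v_n\}$ form a rotation of the $2n$-th roots of unity $\{1,e^{\pi i/n},\dots,e^{(2n-1)\pi i/n}\}$, giving the stated uniqueness.

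The hard part is precisely this middle step: rigorously converting the minimax---a maximum over a continuum of test directions followed by a minimum over configurations---into the single constrained optimization governed by Lemma \ref{lem: cos global max}. Concretely, I must justify that at an optimum the peak cut can be taken inside a prescribed (largest) gap, identify the feasible region so that the relative angles $\eta_i$, rather than the gaps themselves, are the variables fed into Lemma \ref{lem: cos global max}, and get the direction of optimization right so that ``equal gaps'' emerges as the \emph{minimizer} of the energy. Separately disposing of degenerate configurations with coinciding points (zero gaps), and upgrading the equality analysis from the value of the minimum to the full uniqueness statement, is the most delicate piece.
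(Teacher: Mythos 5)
Your setup is fine: passing to $\mathbb{RP}^1$, observing that $\psi\mapsto\sum_i\lvert\cos(\phi_i-\psi)\rvert$ is piecewise sinusoidal with breakpoints at directions perpendicular to the $v_i$, and identifying the energy with the largest peak $\lVert\sum_i\delta_iv_i\rVert$ over contiguous (half-plane) sign patterns -- this is Lemma \ref{lem: discrete optimization} plus the observation that the optimal $\delta$ is realized by a cut in a gap. The closed-form evaluation at equal spacing is also routine. But the step you yourself flag as ``the hard part'' is not a deferred technicality; it is the entire content of the proof, and as described it does not go through. Lemma \ref{lem: cos global max} furnishes an \emph{upper} bound on $\sum_i\cos(\theta_i)$ for angles with a \emph{fixed sum} and cyclic adjacency constraints. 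The relative angles $\eta_i$ you propose to feed into it do not have a fixed sum (only their consecutive differences, the gaps, do), so the lemma does not apply to them as stated; and even where it applies, an upper bound on one cosine sum cannot by itself yield the required \emph{lower} bound on $\max_\delta\lVert\sum_i\delta_i v_i\rVert$. Equal gaps \emph{maximize} each individual cosine/correlation sum, whereas you must show they \emph{minimize} the largest peak; no single invocation of Lemma \ref{lem: cos global max} bridges these two statements, so your ``reduction to a single constrained optimization'' is missing its key ingredient.

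What is needed is a global identity relating \emph{all} the peaks simultaneously, and this is where the paper goes a different way: it lists the $2n$ contiguous partial sums $S_l$ of the antipodally doubled configuration, expands the average $\sum_l\lVert S_l\rVert^2=2n^2+2\sum_k(n-k)S^k$ in terms of the correlation sums $S^k=\sum_j v_j\cdot v_{j+k}$, uses the antipodal cancellation $S^k+S^{n-k}=0$ to restrict to $k\le\lfloor n/2\rfloor$, and only then applies Lemma \ref{lem: cos global max} coset-by-coset in $\ZZ/2n\ZZ$ before pigeonholing. Be warned, though, that the direction-of-inequality worry you raise is genuine even there: Lemma \ref{lem: cos global max} gives $S^k\le 2n\cos(k\pi/n)$, while the averaging step needs a lower bound on $\sum_l\lVert S_l\rVert^2$, so you should not expect to transplant that step verbatim. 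A clean way to close the argument is instead to integrate $f(\psi)=\sum_i\lvert\cos(\phi_i-\psi)\rvert$ over a full period (getting $4n$), bound the integral over the piece corresponding to a gap $g_l$ by $2\lVert S_l\rVert\sin(g_l/2)$, and apply Jensen to $\sum_l\sin(g_l/2)\le 2n\sin(\pi/2n)$; this produces the lower bound $\max_l\lVert S_l\rVert\ge\sin^{-1}(\pi/2n)$ together with the equal-gap equality case. Until some such global relation is in place, your proposal does not prove the lower bound, and the uniqueness clause and degenerate configurations remain open as well.
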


\begin{proof}
	Consider the unordered set $\{\pm v_1,\cdots,\pm v_n\} \subset S^1$, we reorder it such that $v_1,v_2,\cdots,v_n,v_{n+1}:=-v_1,v_{n+2}:=-v_2,\cdots,v_{2n}:=-v_m$ is of the anti-clockwise order.
	Let $S_l:=\sum\limits_{i=l}^n v_i - \sum\limits_{j=1}^{l-1} v_i$ for $1 \leq l \leq n$ and $S_l:=-S_{l-n}$ for $n+1 \leq l \leq 2n$.
	Let $S:=2 \sum\limits_{l=1}^n \| S_l \|^2=\sum\limits_{l=1}^{2n} \| S_l \|^2$.
	
	For convenience, we regard all the indices in $\ZZ/2n\ZZ$.
	For each $1 \leq k \leq n-1$, we use $\langle k \rangle$ to denote the cyclic group generated by $k$  in $\ZZ/2n\ZZ$. 
	It is  known that the greatest common divisor  $\mathrm{gcd}(k,2n) = \frac{2n}{|\langle k \rangle|}$, 
	and there are $\mathrm{gcd}(k,2n)$ cosets $C^k_1,\cdots C^k_{\mathrm{gcd}(k,2n)}$ in $\ZZ/2n\ZZ$ for $\langle k \rangle$.
	Let $S^k:=\sum\limits_{j=1}^{2n} v_j \cdot v_{j+k}=\sum\limits_{i=1}^{\mathrm{gcd}(k,2n)} \sum\limits_{j \in C^k_i} v_j \cdot v_{j+k}$.
	Notice that $v_j+v_{j+n}=0$, \begin{equation}\label{+=0}
	S^k+S^{n-k}=\sum\limits_{j=1}^{2n} v_j \cdot v_{j+k} + v_{j+k} \cdot v_{j+n}=0.
	\end{equation}
	
	For each $1 \leq k \leq \lfloor \frac{n}{2} \rfloor$, since
	the angle between $v_i$ and $v_{i+2k}$ is at most $\pi$, we have $\arccos(v_i \cdot v_{i+k}) + \arccos(v_{i+k} \cdot v_{i+2k}) \leq \pi$. Moreover,
	$$\sum\limits_{i=1}^{\mathrm{gcd}(k,2n)} \sum\limits_{j \in C^k_i} \arccos(v_j \cdot v_{j+k})=\sum\limits_{i=1}^{\mathrm{gcd}(k,2n)} \frac{2k\pi}{\mathrm{gcd}(k,2n)}=2k\pi.$$
	We can therefore apply  Lemma \ref{lem: cos global max}, which shows $S^k \geq 2n \cos(\frac{k\pi}{n})$.
	
	Now we calculate $S$ 
	\begin{eqnarray*}
		S & = & 2n^2 + 2 \sum_{k=1}^{n-1} (n-k) S^k \\
		& \stackrel{(\ref{+=0})}{=} & 2n^2 + 2 \sum_{k=1}^{\lfloor \frac{n}{2} \rfloor} (n-2k)S^k \\
		& \geq & 2n (n + 2 \sum_{k=1}^{\lfloor \frac{n}{2} \rfloor} (n-2k) \cos(\frac{k\pi}{n})) \\
		& = & \frac{4n}{1-\cos(\frac{\pi}{n})}.
	\end{eqnarray*}
	This implies $\| S_l \|^2 \geq \frac{2}{1-\cos(\frac{\pi}{n})}$ for some $1 \leq l \leq n$, and therefore 
	\begin{equation}\label{eqn:energy, p=2}
		E_{n,2}(v_1, \cdots, v_n)^2 = \max_{\substack{\delta\in \{\pm 1\}^n \\ 1 \leq i \leq n}} \| \sum_{i=1}^n \delta_i v_i \|^2 \geq \frac{2}{1-\cos(\frac{\pi}{n})} = \sin^{-2}(\frac{\pi}{2n}).
	\end{equation}
Since (\ref{eqn:energy, p=2}) holds for any $v_1, \cdots v_n$, we prove $\mathcal E_{n,2}^2 \geq  \sin^{-2}(\frac{\pi}{2n}).$ Moreover, the equality holds if and only if the global maximum in Lemma \ref{lem: cos global max} is attained, which means $\{\pm v_1,\cdots, \pm v_m\}$ is evenly distributed on $S^1$. Therefore $\{v_1, \cdots v_n\}$ is a minimax design for $p=2$ if and only if $\{\pm v_1,\cdots,\pm v_n\}$ is obtained from a rotation (by a group element of $\mathrm{SO}_2(\bR)$) of $\{1,e^{\frac{\pi i}{n}},e^{\frac{2 \pi i}{n}},\cdots,e^{(2n-1)\frac{\pi i}{n}}\}$.
\end{proof}

\subsection{Case 3: $p=3$, $n=4$}\label{subsec: case 3}
We now turn our attention to the  $p = 3, n > 3$ case. It turns out that finding the global minimal of  $E_{n,3}$ is quite difficult, as the function usually has more than one local minimal. Now we can only find all the local minimals and thus solve the case  $p = 3, n = 4$. We need  a few more definitions and  lemmas to study the properties of  local minimas.

Consider the function $$l:(\Sp)^n \to \Rnon, ~~ l(v_1,\cdots,v_n):=\max_{\substack{\delta \in \{-1, 1\}^n }} \| \sum_{i=1}^n \delta_i v_i \|^2.$$
We say $l$ attains its local minimal at $\ul{v}=(v_1,\cdots,v_n)$ if $l(\ul{v})$ is the minimal value of $l$  in a neighborhood of $\ul{v}$ in $(S^{p-1})^n$. 

For each fixed  $\ul{v} \in (\Sp)^n$ and each $\alpha=(\alpha_1,\cdots,\alpha_n) \in \{\pm 1\}^n$, we define $V_\alpha:=\sum\limits_{i=1}^n \alpha_i v_i$.
We also denote by $\pm M_{\ul{v}}:=   \{\beta \in\{\pm 1\}^n | \| V_\beta \|^2=l(\ul{v}) \}$ the index set which contains all the binary antipodal combinations of $v$ that attains $l(\ul v)$. It is clear that $\pm M_{\ul{v}}$ is invariant under sign flips, that is, $\alpha \in  \pm M_{\ul{v}}$ is equivalent to $-\alpha \in  \pm M_{\ul{v}}$.
Therefore we can choose $M_{\ul{v}} \subset \pm M_{\ul{v}}$ such that $\pm M_{\ul{v}}=M_{\ul{v}} \cup -M_{\ul{v}}$, and $M_{\ul{v}} \cap -M_{\ul{v}} = \emptyset$, where $ -M_{\ul{v}} := \{\alpha \in \{\pm 1\}^n | -\alpha \in  M_{\ul{v}}\}$. The next lemma studies the behavior of the local minimals of the function $l$.

\begin{lem}\label{lem: local minimal}
	Suppose $l$ attains its local minimal at $\ul{v}=(v_1,\cdots,v_n)$.
	For each $1 \leq i \leq n$ and $\alpha \in M_{\ul{v}}$, there exist $c^i_{\alpha} \in \bR$ such that the vector $(c^i_\alpha)_\alpha\neq \vec{0} \in \bR^{|M_{\ul{v}}|} $, and $\sum\limits_{\alpha \in M_{\ul{v}}} c^i_\alpha V_\alpha \in \bR^p$ is a scalar multiple of $v_i$ for every $1 \leq i \leq n$.
\end{lem}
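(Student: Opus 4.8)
The plan is to invoke the first-order (necessary) optimality condition for the max-of-smooth-functions $l$ on the product of spheres $(\Sp)^n$, and then to convert that condition into a statement about convex hulls via a separating-hyperplane argument, carried out one coordinate $i$ at a time.

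First I would record how each smooth piece $\|V_\alpha\|^2$ responds to a tangential perturbation. Fixing $\ul v$ and a tangent vector $w=(w_1,\dots,w_n)$ with $w_j\perp v_j$, realized by a smooth curve $\gamma(t)$ on $(\Sp)^n$ with $\gamma(0)=\ul v$ and $\gamma'(0)=w$, one computes $\frac{d}{dt}\big|_{0}\|V_\alpha(\gamma(t))\|^2 = 2\,V_\alpha\cdot\sum_{j}\alpha_j w_j$. A useful observation is that $\alpha$ and $-\alpha$ yield identical derivatives (since $V_{-\alpha}=-V_\alpha$), so it is legitimate to index everything by the representatives $M_{\ul v}$. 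Because $l=\max_\alpha\|V_\alpha\|^2$ and only the active indices $\pm M_{\ul v}$ matter for small $t$, the one-sided directional derivative of $l$ is $D_w^+ l = \max_{\alpha\in M_{\ul v}} 2\,V_\alpha\cdot\sum_j \alpha_j w_j$. The hypothesis that $\ul v$ is a local minimum forces $D_w^+ l \ge 0$ for every tangent $w$.

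Next I would specialize to perturbations supported on a single coordinate $i$, i.e.\ take $w_j=0$ for $j\neq i$ and let $w_i$ range over $v_i^\perp$. Writing $P_i$ for the orthogonal projection onto $v_i^\perp$ and using $w_i\perp v_i$, the optimality condition becomes $\max_{\alpha\in M_{\ul v}} \bigl(\alpha_i P_i(V_\alpha)\bigr)\cdot w_i \ge 0$ for all $w_i\in v_i^\perp$. Setting $S_i:=\{\alpha_i P_i(V_\alpha):\alpha\in M_{\ul v}\}\subset v_i^\perp$, this says precisely that $S_i$ lies in no open halfspace through the origin. The crux of the proof is the standard equivalence, proved via the separating-hyperplane theorem, that a finite set $S_i$ satisfies $\max_{x\in S_i} x\cdot w\ge 0$ for all $w$ if and only if $0\in\operatorname{conv}(S_i)$.

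Finally I would read off the conclusion. From $0\in\operatorname{conv}(S_i)$ we obtain nonnegative weights $\lambda_\alpha$, not all zero (indeed $\sum_\alpha\lambda_\alpha=1$), with $\sum_{\alpha\in M_{\ul v}}\lambda_\alpha\,\alpha_i P_i(V_\alpha)=0$. Putting $c^i_\alpha:=\lambda_\alpha\alpha_i$ gives a nonzero vector $(c^i_\alpha)_\alpha$ satisfying $P_i\bigl(\sum_\alpha c^i_\alpha V_\alpha\bigr)=0$, i.e.\ $\sum_\alpha c^i_\alpha V_\alpha\in(v_i^\perp)^\perp=\spn(v_i)$, which is the desired scalar multiple of $v_i$; repeating for each $i$ yields the claim. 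I expect the main obstacle to be the careful justification of the two softer points: that $D_w^+ l$ really is the maximum over the active indices of the pieces' derivatives (and that $D_w^+l\ge 0$ is genuinely the correct necessary condition on the manifold $(\Sp)^n$), together with the convexity/separation argument converting \emph{decreasing in no direction} into membership of $0$ in the convex hull. The differential-geometric bookkeeping and the coefficient extraction are then routine.
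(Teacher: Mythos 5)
Your proof is correct for the lemma as stated, but it takes a genuinely different route from the paper's. The paper perturbs all $n$ coordinates simultaneously: it packages the first-order data into a single linear map $D:\prod_i T_i\to\bR^{|M_{\ul v}|}$, shows by an explicit $\varepsilon$-perturbation estimate (Claim~\ref{claim: intersection}) that the image of $D$ misses the open negative orthant, concludes that $D$ is not surjective, and extracts one nonzero vector $(c_\alpha)_\alpha$ annihilating the image; the coefficients for every $i$ are then the linked quantities $c^i_\alpha=c_\alpha\alpha_i$. You instead work one coordinate at a time, replace the explicit perturbation estimates by the standard directional-derivative formula for a finite maximum of smooth functions ($D^+_w l=\max_{\alpha\ \mathrm{active}}\,2\,V_\alpha\cdot\sum_j\alpha_j w_j\ge 0$ at a local minimum), and convert ``no descent direction in $v_i^\perp$'' into $0\in\operatorname{conv}(S_i)$ by separation. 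Both steps you flag as soft points are standard and hold here, and your route buys something the paper's does not: the weights $\lambda_\alpha=\alpha_i c^i_\alpha$ are nonnegative and sum to $1$.

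The one substantive caveat: because you treat each $i$ separately, your weights $\lambda_\alpha$ may depend on $i$, whereas the paper's construction produces a single $(c_\alpha)_\alpha$ serving all $i$ at once via $c^i_\alpha=c_\alpha\alpha_i$. That simultaneous form --- not the literal statement of the lemma --- is what Proposition~\ref{prop: case3} actually invokes (a single triple $(x,y,z)$ producing all four parallel relations $xM_1\pm yM_2\pm zM_3\parallel v_i$ at once). So your argument proves the lemma as written but not the strengthening used downstream. The fix stays inside your framework: apply the optimality condition to simultaneous perturbations, so that the image of the full derivative map (a linear subspace of $\bR^{|M_{\ul v}|}$) is disjoint from the open negative orthant, and then separate that subspace from the orthant; this yields one common $(c_\alpha)_\alpha$ vanishing on the image, and in fact with all $c_\alpha\ge 0$, which is even a little stronger than what the paper records.
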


\begin{proof}
	Let $T_i$ be the tangent space of $v_i$ on $\Sp$ translated to the orgin as a linear subspace of $\bR^p$ (comprising by vectors orthogonal to $v_i$).
	We define a linear map $D$ (which can be viewed as essentially a directional derivative) from $\prod\limits_{i=1}^n T_i$ to $\bR^{|M_{\ul{v}}|}$ as follows.
	\begin{eqnarray*}
		D : \prod\limits_{i=1}^n T_i & \to & \bR^{|M_{\ul{v}}|} \\
		(t_i)_{1\leq i\leq n} & \mapsto & (\langle V_\alpha , \sum\limits_{j=1}^n \alpha_j t_j \rangle)_{\alpha \in M_{\ul{v}}}
	\end{eqnarray*}

	By the minimal assumption, we claim:
	\begin{claim}\label{claim: intersection} The image of $D$ does not intersect with $\bR_{< 0}^{|M_{\ul{v}}|}$, in other words, every vector in the image of $D$ must have at least one non-negative coordinate. 
	\end{claim}
	Assume for the claim is true, then $D$ is clearly not surjective. Since $D$ is a linear but not surjective map,  there exists a nonzero vector which is orthogonal to the image of $D$. In other words, we can find a non-zero vector $(c_\alpha)_\alpha \in \bR^{|M_{\ul{v}}|}$ such that
	\begin{equation}\label{eqn: non-surjective map}
	\sum_{\alpha \in M_{\ul{v}}} c_\alpha \langle V_\alpha , \sum\limits_{j=1}^n \alpha_j t_j \rangle = 0, ~ \mathrm{for~ any~ } (t_i)_{1\leq i\leq n} \text{ with } t_i \in T_i ,1\leq i\leq n.
	\end{equation}
	Taking $c_\alpha^i:=c_\alpha \alpha_i$, as \[ 0 = \sum_{\alpha \in M_{\ul{v}}} c_\alpha \langle V_\alpha, \sum\limits_{j=1}^n \alpha_j t_j \rangle = \sum\limits_{i=1}^n \langle \sum\limits_{\alpha \in M_{\ul{v}}} c_\alpha^i V_\alpha, t_i \rangle, \] we conclude that $\langle \sum\limits_{\alpha \in M_{\ul{v}}} c_\alpha^i V_\alpha, t_i \rangle=0$ for any $t_i \in T_i$, therefore the vector $\sum\limits_{\alpha \in M_{\ul{v}}} c_\alpha^i V_\alpha$ is orthogonal to the tanget space $T_i$ and is in turn a scalar multiple of $v_i$, as desired.\end{proof}

We conclude Lemma \ref{lem: local minimal} by proving Claim \ref{claim: intersection}:
\begin{proof}[Proof of Claim \ref{claim: intersection}]
	Assume for contradiction that there exists a vector  $t^\circ := (t^\circ_i)_{i \in \{1,2,\cdots, n\}} \in  \prod\limits_{i=1}^n T_i$ such that $\langle V_\alpha , \sum\limits_{j=1}^n \alpha_j t^\circ_j \rangle < 0$ for every $\alpha \in M_{\ul{v}}$. We may  assume without loss of generality that $||t^\circ_i|| \leq 1$ for all $1 \leq i \leq m$. We can then pertube each $v_i$ a little bit to construct a new set of vectors $\ul{\tilde v}$ which has a smaller value of $l$, and therefore contradicts with the assumption that $l$ attains local minimum at $\ul v$. 
	
	Let $r := \max_{\alpha \in M_{\ul{v}}} \langle V_\alpha , \sum\limits_{j=1}^n \alpha_j t^\circ_j \rangle < 0$ and $\Delta = l(\ul v) - \max_{\alpha \notin \pm M_{\ul v}} \lVert V_\alpha \rVert > 0$. Choose a small $\epsilon > 0$ which satisfies
	\begin{equation}\label{eqn:inequality for epsilon, 1}
	(2n l(\ul v) + n^2) \varepsilon + 2n^2\varepsilon^2 + n^2\varepsilon^3 < -2r,
	\end{equation}
	and
	\begin{equation}\label{eqn:inequality for epsilon, 2}
	\epsilon <  \frac{\Delta}{4n}.
	\end{equation}

	Set $\ul{\tilde v} := (\tilde v_1, \cdots, \tilde v_n)$ where  $\tilde v_i : = \frac{v_i + \varepsilon t^\circ_i}{||v_i + \varepsilon t^\circ_i||}  $. For every $\alpha \in M_{\ul v}$, we calculate $\lVert \sum_{i=1}^n \alpha_i \tilde  v_i\rVert^2$ as:
	\begin{align*}
	\lVert \sum_{i=1}^n \alpha_i \tilde  v_i\rVert^2  =
	\lVert \sum_{i=1}^n \alpha_i \frac{v_i + \varepsilon t^\circ_i}{\lVert v_i + \varepsilon t^\circ_i\rVert} \rVert^2 & = \lVert \sum_{i=1}^n \bigg(\alpha_i \big( \frac{1}{\lVert v_i + \varepsilon t^\circ_i\rVert} -1 \big) (v_i + \varepsilon t_i^\circ)\bigg) + V_\alpha + \varepsilon \sum_{i=1}^n \alpha_i t_i^\circ \rVert ^2\\
	& \leq \bigg( \sum_{i=1}^n (\lVert v_i + \varepsilon t_i^\circ \rVert -1) + \lVert V_\alpha + \varepsilon \sum_{i=1}^n \alpha_i t_i^\circ\rVert\bigg)^2\\
	& \leq \bigg( n \varepsilon^2 + \lVert V_\alpha + \varepsilon \sum_{i=1}^n \alpha_i t_i^\circ\rVert \bigg) ^2 \\
	& = n^2\varepsilon^4 + \lVert V_\alpha + \varepsilon \sum_{i=1}^n \alpha_i t_i^\circ\rVert ^2 + 2n \varepsilon^2 \lVert V_\alpha + \varepsilon \sum_{i=1}^n \alpha_i t_i^\circ\rVert \\
	& \leq  \lVert V_\alpha \rVert^2 + n^2\varepsilon^2 + 2r\epsilon +  n^2\varepsilon^4 + 2n l(\ul v)\varepsilon^2 + 2n^2\varepsilon^3 <   \lVert V_\alpha \rVert^2
	\end{align*}
	where the first and second inequality are triangle inequalities and the last inequality is immediate after applying inequality \ref{eqn:inequality for epsilon, 1}. Meanwhile, for every $(\alpha_1, \cdots, \alpha_n) \in \{\pm 1\}^n$, the norm difference between $\sum_{i=1}^n \alpha_i v_i$ and the perturbed vector $\sum_{i=1}^n \alpha_i \tilde v_i$ can be bounded by:
	\begin{align*}
	\lVert \sum_{i=1}^n \alpha_i (  v_i - \tilde v_i) \ \rVert &\leq \sum_{i = 1}^n \lVert  v_i - \tilde v_i \rVert = \sum_{i = 1}^n \lVert  v_i - (v_i + \varepsilon t_i^\circ) + (1 - \frac{1}{\sqrt{1+\varepsilon^2}}) (v_i + \varepsilon t_i^\circ)\rVert\\
	& \leq \sum_{i=1}^n \bigg(\varepsilon + \sqrt {1+ \varepsilon^2} - 1\bigg) \leq 2n\epsilon < \frac{\Delta}{2}.
	\end{align*}
	Therefore, for every $\alpha \notin \pm M_{\ul{v}}$ we have, 
	\[
	\lVert \sum_{i=1}^n \alpha_i \tilde v_i  \rVert <  \lVert \sum_{i=1}^n \alpha_i  v_i  \rVert + \frac{\Delta}{2} \leq l(\ul v) - \frac{\Delta}{2},
	\]
	for every $\alpha \in \pm M_{\ul{v}}$, we have,
	\[
	\lVert \sum_{i=1}^n \alpha_i \tilde v_i  \rVert < l(\ul v).
	\]
	Combining the two cases above, we have $l(\ul {\tilde v}) = \max_{\alpha\in \{\pm 1\}^n} \lVert \sum_{i=1}^n \alpha_i \tilde v_i  \rVert < l(\ul v)$, which contradicts with the local minimal assumption.
\end{proof}
We single out two types of configurations when $n=3, ~ p=4$.

For $\ul{v}=(v_1,v_2,v_3,v_4), ~ v_i \in \bR^3$ up to permutations of $\{\pm v_1, \pm v_2, \pm v_3, \pm v_4\}$ and rotations of $\bR^3$ (under action of $\mathrm{O}(3)$), we define the cube type and the triangular pyramid type as follows (see also \ref{fig: two types} for illustrations):
\begin{defn}[Cube Type]\label{def: cube}
The set of vectors $\ul{v}=(v_1,v_2,v_3,v_4) \in (S^2)^4$ is defined to be of the cube type if $\{\pm v_1, \pm v_2, \pm v_3, \pm v_4\}$ are $8$ vertices of the inscribed cube inside $S^2$
\end{defn}
\begin{defn}[Triangular Pyramid Type]\label{def: triangular pryamid}
The set of vectors $\ul{v}=(v_1,v_2,v_3,v_4) \in (S^2)^4$ is defined to be of the triangular pyramid type if $\pm v_1$ are north and south poles and $v_2, v_3, v_4$ are vertices of an equilateral triangle on the equator.
\end{defn}

\begin{figure}[htbp]  
	\centering 
		\begin{tikzpicture}  [tdplot_main_coords, scale = 2]
	\coordinate (P) at ({sqrt(3)/3},{sqrt(3)/3},{sqrt(3)/3});
	\coordinate (Q) at ({sqrt(3)/3},{-sqrt(3)/3},{sqrt(3)/3});
	\coordinate (R) at ({-sqrt(3)/3},{sqrt(3)/3},{sqrt(3)/3});
	\coordinate (S) at ({-sqrt(3)/3},{-sqrt(3)/3},{sqrt(3)/3});
	\coordinate (P') at ({-sqrt(3)/3},{-sqrt(3)/3},{-sqrt(3)/3});
	\coordinate (Q') at ({-sqrt(3)/3},{sqrt(3)/3},{-sqrt(3)/3});
	\coordinate (R') at ({sqrt(3)/3},{-sqrt(3)/3},{-sqrt(3)/3});
	\coordinate (S') at ({sqrt(3)/3},{sqrt(3)/3},{-sqrt(3)/3});
	\shade[ball color = lightgray,
	opacity = 0.5
	] (0,0,0) circle (1cm);
	
	\tdplotsetrotatedcoords{0}{0}{0};
	\draw[dashed,
	tdplot_rotated_coords,
	gray
	] (0,0,0) circle (1);
	
	\tdplotsetrotatedcoords{90}{90}{90};
	\draw[dashed,
	tdplot_rotated_coords,
	gray
	] (1,0,0) arc (0:180:1);
	
	\tdplotsetrotatedcoords{0}{90}{90};
	\draw[dashed,
	tdplot_rotated_coords,
	gray
	] (1,0,0) arc (0:180:1);
	
	
	\draw[-stealth] (0,0,0) -- (1.80,0,0);
	\draw[-stealth] (0,0,0) -- (0,1.30,0);
	\draw[-stealth] (0,0,0) -- (0,0,1.30);
	\draw[dashed, gray] (0,0,0) -- (-1,0,0);
	\draw[dashed, gray] (0,0,0) -- (0,-1,0);
	
	\draw[dashed, blue!50] (0,0,0) -- (P) node[above] {$v_1$};
	\draw[dashed,  blue!50] (0,0,0) -- (Q) node[above] {$v_2$};
	\draw[dashed, blue!50] (0,0,0) -- (R) node[above] {$v_4$};
	\draw[dashed,  blue!50] (0,0,0) -- (S) node[above] {$v_3$};;
	\draw[thick, blue!50] (Q) -- (P);
	
	\draw[thick,  blue!50] (Q) -- (S);
	
	\draw[thick,  blue!50] (P) -- (R);
	\draw[thick,  blue!50] (R) -- (S);
	
	\draw[thick,red!50] (Q') -- (P');
	
	\draw[thick, red!50] (Q') -- (S');
	
	\draw[thick,red!50] (P') -- (R');
	\draw[thick, red!50] (R') -- (S');

	\draw[thick, black!50] (P) -- (S');
	
	\draw[thick, black!50] (S) -- (P');
	
	\draw[thick, black!50] (R) -- (Q');
	\draw[thick, black!50] (Q) -- (R');
	
	\draw[dashed, -stealth, red!50] (0,0,0) -- (P');
	\draw[dashed, -stealth, red!50] (0,0,0) -- (Q');
	\draw[dashed, -stealth, red!50] (0,0,0) -- (R');
	\draw[dashed, -stealth, red!50] (0,0,0) -- (S');
	
	\draw[fill = lightgray!50] (P) circle (0.5pt);
	\draw[fill = lightgray!50] (Q) circle (0.5pt);
	\draw[fill = lightgray!50] (R) circle (0.5pt);
	\draw[fill = lightgray!50] (S) circle (0.5pt);
	\draw[fill = lightgray!50] (P') circle (0.5pt);
	\draw[fill = lightgray!50] (Q') circle (0.5pt);
	\draw[fill = lightgray!50] (R') circle (0.5pt);
	\draw[fill = lightgray!50] (S') circle (0.5pt);  
	\end{tikzpicture} 
	\begin{tikzpicture} [tdplot_main_coords, scale = 2]
	\coordinate (P) at ({0},{0},{1});
	\coordinate (Q) at ({1},{0},{0});
	\coordinate (R) at ({-1/2},{-sqrt(3)/2},{0});
	\coordinate (S) at ({-1/2},{sqrt(3)/2},{0});
	\coordinate (P') at ({0},{0},{-1});
	\coordinate (Q') at ({-1},{0},{0});
	\coordinate (R') at ({1/2},{sqrt(3)/2},{0});
	\coordinate (S') at ({1/2},{-sqrt(3)/2},{0});
	\shade[ball color = lightgray,
	opacity = 0.5
	] (0,0,0) circle (1cm);
	
	\tdplotsetrotatedcoords{0}{0}{0};
	\draw[dashed,
	tdplot_rotated_coords,
	gray
	] (0,0,0) circle (1);
	
	\tdplotsetrotatedcoords{90}{90}{90};
	\draw[dashed,
	tdplot_rotated_coords,
	gray
	] (1,0,0) arc (0:180:1);
	
	\tdplotsetrotatedcoords{0}{90}{90};
	\draw[dashed,
	tdplot_rotated_coords,
	gray
	] (1,0,0) arc (0:180:1);
	
	
	\draw[-stealth] (0,0,0) -- (1.80,0,0);
	\draw[-stealth] (0,0,0) -- (0,1.30,0);
	\draw[-stealth] (0,0,0) -- (0,0,1.30);
	\draw[dashed, gray] (0,0,0) -- (-1,0,0);
	\draw[dashed, gray] (0,0,0) -- (0,-1,0);
	
	\draw[dashed, -stealth, blue!50] (0,0,0) -- (P) node[left] {$v_1$};
	\draw[dashed, -stealth, blue!50] (0,0,0) -- (Q) node[left] {$v_2$};;
	\draw[dashed, -stealth, blue!50] (0,0,0) -- (R) node[left] {$v_4$};;
	\draw[dashed, -stealth, blue!50] (0,0,0) -- (S) node[right] {$-v_3$};;
	\draw[thick , blue!50] (Q) -- (P);
	\draw[thick , blue!50] (Q) -- (R);
	\draw[thick , blue!50] (Q) -- (S);
	\draw[thick , blue!50] (P) -- (S);
	\draw[thick, blue!50] (P) -- (R);
	\draw[thick, blue!50] (R) -- (S);

	\draw[dashed, -stealth, red!50] (0,0,0) -- (P');
	\draw[dashed, -stealth, red!50] (0,0,0) -- (Q');
	\draw[dashed, -stealth, red!50] (0,0,0) -- (R');
	\draw[dashed, -stealth, red!50] (0,0,0) -- (S');
	
	\draw[fill = lightgray!50] (P) circle (0.5pt);
	\draw[fill = lightgray!50] (Q) circle (0.5pt);
	\draw[fill = lightgray!50] (R) circle (0.5pt);
	\draw[fill = lightgray!50] (S) circle (0.5pt);
	\draw[fill = lightgray!50] (P') circle (0.5pt);
	\draw[fill = lightgray!50] (Q') circle (0.5pt);
	\draw[fill = lightgray!50] (R') circle (0.5pt);
	\draw[fill = lightgray!50] (S') circle (0.5pt);
	\end{tikzpicture}

	\caption[]{Left: Cube Type, Right: Triangular Pyramid Type.  Notice that we flip the sign of $v_3$ in the right subfigure for convention such that $v_1 + v_2 + v_3 + v_4$ attains the maximum of $\| \sum_{i=1}^4 \delta_i v_i \|^2$.} \label{fig: two types} 
\end{figure}
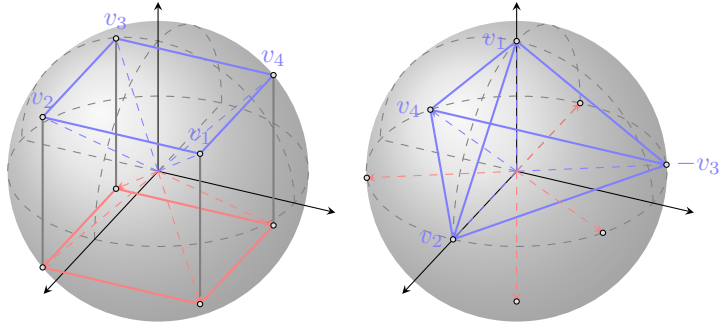  

\begin{prop}\label{prop: case3}
	Suppose $p=3, ~ n=4$.
	If $l$ attains its local minimal at $\ul{v}=(v_1,v_2,v_3,v_4)$ where $\spn\{v_1, \cdots, v_4\} = \bR^3$ and $v_i \neq \pm v_j$ for any two indices $i\neq j$, then 
	$\ul{v}$ is either of cube type or of triangular pyramid type.
	If $\ul{v}$ is of cube type, $l(\ul{v})=\frac{16}{3}$.
	If $\ul{v}$ is of triangular pyramid type, $l(\ul{v})=5$.
	In particular, $l$ attains its global minimal at triangular pyramid type configurations, and 
	$$\min_{\substack{\ul v \in (S^2)^4}} \max_{\substack{\delta \in \{\pm 1\}^4}} \| \sum_{i=1}^4 \delta_i v_i \|^2 = 5.$$
\end{prop}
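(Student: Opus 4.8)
The plan is to convert the local-minimality hypothesis into rigid algebraic constraints via Lemma~\ref{lem: local minimal}, and then classify the finitely many combinatorial possibilities for the set of norm-maximizing sign patterns. \emph{Normalization.} Since $l$ is invariant under flipping the sign of any single $v_i$ and under rotating $\bR^3$, I may assume the all-ones pattern $\mathbf 1=(1,1,1,1)$ belongs to $M_{\ul v}$, so that $V_{\mathbf 1}=v_1+v_2+v_3+v_4$ realizes $l(\ul v)$. All the data is then carried by the Gram matrix $G=(v_i\cdot v_j)_{i,j}$, which is positive semidefinite, has unit diagonal, and---because $\spn\{v_1,\dots,v_4\}=\bR^3$---has rank exactly $3$. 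Writing $\|V_\alpha\|^2=\alpha^\top G\alpha=4+2\sum_{i<j}\alpha_i\alpha_j g_{ij}$, the set $M_{\ul v}$ is exactly the collection of sign classes maximizing the quadratic form $Q(\alpha):=\sum_{i<j}\alpha_i\alpha_j g_{ij}$, and all of them share the common value $Q(\mathbf 1)$.

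\emph{From Lemma~\ref{lem: local minimal} to a matrix identity.} Because $\operatorname{rank}G=3$, the space of linear dependences among the $v_i$ is a single line $\bR\mu$, where $\mu$ spans $\ker G$ and $\sum_j\mu_j v_j=0$. Lemma~\ref{lem: local minimal} supplies a nonzero vector $(c_\alpha)_{\alpha\in M_{\ul v}}$ with $\sum_{\alpha}c_\alpha\alpha_i V_\alpha$ parallel to $v_i$ for each $i$. Expanding $V_\alpha=\sum_j\alpha_j v_j$ and collecting coefficients, this says $\sum_j A_{ij}v_j=\lambda_i v_i$, where $A:=\sum_{\alpha\in M_{\ul v}}c_\alpha\,\alpha\alpha^\top$ is symmetric. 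Each row of $A-\operatorname{diag}(\lambda)$ is then a linear dependence among the $v_j$, hence a scalar multiple of $\mu$; symmetry of $A$ forces $A=\operatorname{diag}(\lambda)+t\,\mu\mu^\top$ for some scalar $t$. The upshot is the factorization constraint
\[
\sum_{\alpha\in M_{\ul v}} c_\alpha\,\alpha_i\alpha_j \;=\; t\,\mu_i\mu_j \qquad (i\neq j),
\]
which couples the purely combinatorial object $M_{\ul v}$ to the geometric kernel direction $\mu$.

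\emph{Combinatorial classification.} I would now run a finite case analysis over the type of $M_{\ul v}$, viewed as a subset of the eight antipodal classes in $\{\pm1\}^4$, combining three ingredients: the factorization constraint above; the equal-norm relations $Q(\alpha)=Q(\mathbf 1)$ for $\alpha\in M_{\ul v}$ together with the strict inequalities $Q(\alpha)<Q(\mathbf 1)$ off $\pm M_{\ul v}$; and realizability of $G$ (positive semidefinite, unit diagonal, rank $3$, with $v_i\neq\pm v_j$ ruling out equal or antipodal rows). To keep the number of cases manageable I would quotient by the hyperoctahedral group of signed permutations of the four points, which acts compatibly on configurations and on sign patterns. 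Small $|M_{\ul v}|$ is eliminated first (for instance $|M_{\ul v}|=1$ forces all off-diagonal entries of $\mu\mu^\top$ equal, hence $\mu=\mathbf 1$ and $v_1+v_2+v_3+v_4=0$, contradicting that $\mathbf 1$ is a \emph{maximizer}); the surviving configurations are then shown to have Gram matrix equal, up to the symmetry, to that of the cube type or of the triangular pyramid type, for which a direct computation gives $l=\tfrac{16}{3}$ and $l=5$ respectively.

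\emph{Global minimum.} A minimizer exists by compactness of $(S^2)^4$, and degenerate configurations are bounded away from below: if the four points are coplanar the energy reduces to the $p=2$ problem, so by Proposition~\ref{prop:case2} it equals $\sin^{-2}(\tfrac{\pi}{8})>5$; if some $v_i=\pm v_j$, averaging $\|{\pm}2v_i+\delta_k v_k+\delta_l v_l\|^2$ over the two free signs shows $l\geq 6$. Hence any global minimizer spans $\bR^3$ with distinct non-antipodal points, so the classification applies and it is of cube or pyramid type; since $\tfrac{16}{3}>5$, it must be the pyramid and $l=5$. \emph{Main obstacle.} The crux is the combinatorial classification: marrying the factorization constraint to the positive-semidefinite realizability of $G$ while organizing the cases modulo the signed-permutation symmetry without omission or double-counting is delicate, and confirming that the two surviving types are genuine local minima rather than saddles requires an extra perturbative check.
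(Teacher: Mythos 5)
Your overall strategy coincides with the paper's: invoke Lemma~\ref{lem: local minimal} at a local minimum, translate its conclusion into algebraic constraints tying the sign patterns in $M_{\ul{v}}$ to the unique linear dependence among $v_1,\dots,v_4$, and then classify the possible $M_{\ul{v}}$. Your packaging of the constraint as $A=\operatorname{diag}(\lambda)+t\,\mu\mu^{\top}$ with $A=\sum_{\alpha}c_\alpha\,\alpha\alpha^{\top}$ is correct and is a clean, coordinate-free equivalent of the paper's computation with $[p_2:p_3:p_4]=[a:b:c]$ etc.; your treatment of the degenerate cases (coplanar reduces to Proposition~\ref{prop:case2}; $v_i=\pm v_j$ gives energy at least $6$ by the averaging trick) and the deduction of the global minimum also match the paper. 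One small remark: you do not need to verify that the cube and pyramid are genuine local minima --- the global minimizer exists by compactness, is nondegenerate, hence is a local minimizer of one of the two classified types, and $5<\tfrac{16}{3}$ finishes the argument.

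The genuine gap is that the combinatorial classification --- which you correctly flag as the crux --- is not carried out, and it is where essentially all of the content of the proposition lives. Concretely, the paper must first establish $|M_{\ul{v}}|\geq 3$ (your $|M_{\ul{v}}|=1$ argument is fine, but $|M_{\ul{v}}|=2$ requires separating the cases $V_\alpha,V_\beta$ independent versus $V_\alpha=\pm V_\beta$, the latter splitting further according to how many coordinates $\alpha$ and $\beta$ differ in). It then splits on whether every pair in $M_{\ul{v}}$ differs in exactly two coordinates: in that case one shows $|M_{\ul{v}}|=3$ exactly (four such patterns would force $\|V_{\mathbf 1}\|^2=4$ and hence pairwise orthogonality, impossible in $\bR^3$), and the constraint $A_{ij}=t\mu_i\mu_j$ combined with the equal-norm relations pins down the Gram matrix of the cube. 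In the complementary case one must show $|M_{\ul{v}}|\geq 4$ (ruling out $|M_{\ul{v}}|=3$ via the factorization constraint, which forces degeneracies like $v_2+v_3+v_4=0$ or $M_3=0$), then analyze which fourth and fifth patterns can occur, each subcase either leading to the relations $v_1\perp v_2\perp v_3+v_4$ --- and thence, via a short optimization in the coefficients $(a,b)$ of $v_3=av_1+bv_2+xe_3$, to the pyramid Gram matrix --- or to a contradiction. None of these steps is automatic from the ingredients you list, and without them the proposal is a roadmap rather than a proof.
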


\begin{proof}
Firstly, straightforward calculation verifies the value of $l$ under the cube type equals $\frac{16}{3}$, and the value of $l$ under the triangular pyramid type equals $5$. 
Now we show that  $\ul v = (v_1,v_2,\cdots, v_4)$ will not attain the global minimal of function $l$ if $\dim(\spn(v_1, v_2,v_3,v_4) < 3$ or $v_i = \pm v_j$ for some indices $i,j$. Suppose $\dim(\spn(v_1, v_2,v_3,v_4) < 3$, then the problem reduces to Case 2 as discussed in Section \ref{subsec: case 2}, and we know the minimal value of $l$ equals $\sin(\pi/8)^{-2} \approx 6.828$. Suppose $v_i = \pm v_j$ for some $i,j$, we may assume without loss of generality that $v_1 = v_2$, then we claim the minimal of $l$ under this extra assumption ($v_1 = v_2$) equals $6 = \lVert 2v1 \lVert^2  + \lVert v_2 \lVert ^2 + \lVert v_3 \lVert^2$, and the minimum is attained when $v_1, v_3, v_4$ are mutually orthogonal. In other words,

$$\min_{\substack{\ul v \in (S^2)^4\\ v_1 = v_2}} \max_{\substack{\delta \in \{\pm 1\}^4}} \| \sum_{i=1}^4 \delta_i v_i \|^2 = 6.$$
The proof is essentially the same as Proposition \ref{prop:case1}. 
  Since values are larger than $5$ -- the function value of $l$ under the triangular pyramid configuration,  we may assume without loss of generality that $\spn\{v_1, \cdots, v_4\} = \bR^3$ and $v_i \neq v_j$ for any $i\neq j\in \{1,2,3,4\}$.

Next, suppose $\ul v$ attains the local minimum of $l$, we study the cardinality of the set $M_{\ul v}$. We  make the following claim, which will be proved at the end of this section.
\begin{claim}\label{claim:cardinality}
 Suppose $\ul v = (v_1, v_2, v_3, v_4)$ satisfies the assumption in Proposition \ref{prop: case3}, then $|M_{\ul{v}}| \geq 3 $.
\end{claim}

Given $|M_{\ul{v}}| \geq 3 $, we now discuss two possible cases for $M_{\ul{v}}$ separately. The two cases  eventually correspond to the cube design and the triangular pyramid design, as we will see shortly. 
\begin{itemize}
	\item Case 1: Every pair of elements in $M_{\ul v}$ are differed by exactly two indices. In other words,
	for any $\alpha$ and $\beta$ in $M_{\ul{v}}$, there are exactly two indices $1 \leq i_1 \neq i_2 \leq 4$ such that $\alpha_{i_1}= -\beta_{i_1}, ~ \alpha_{i_2}= -\beta_{i_2}$.
	\item Case 2: The complement of Case 1. In other words,  there exist $\alpha$ and $\beta$ in $M_{\ul{v}}$ such that they  differ by one or three indices. 
\end{itemize}
If $M_{\ul{v}}$ satisfies Case 1, we can make suitable relabelling such that
$$\{(1,1,1,1), ~ (1,1,-1,-1), ~(1,-1,1,-1)\} \subset M_{\ul{v}}$$ and therefore 
\begin{equation}\label{eqn: three equalities}
\| v_1+v_2+v_3+v_4 \|^2 = \| v_1+v_2-v_3-v_4 \|^2 = \| v_1-v_2+v_3-v_4 \|^2.
\end{equation}
Expanding \ref{eqn: three equalities} yields
\begin{equation}\label{e2}
\langle v_1, v_2 \rangle + \langle v_3, v_4 \rangle = \langle v_1, v_3 \rangle + \langle v_2, v_4 \rangle = -(\langle v_1, v_4 \rangle + \langle v_2, v_3 \rangle).
\end{equation}
Now we  further claim  $|M_{\ul{v}}| = 3$ in Case 1, as otherwise by the same argument we have:
\begin{equation}\label{eqn: four equalities}
\| v_1+v_2+v_3+v_4 \|^2 = \| v_1+v_2-v_3-v_4 \|^2 = \| v_1-v_2+v_3-v_4 \|^2 = \| v_1 - v_2 - v_3 + v_4 \|^2.
\end{equation}
Expanding \ref{eqn: four equalities} and summing up the four terms  cancels out all the cross-terms and gives us $\| v_1+v_2+v_3+v_4 \|^2 = 4$, which implies $\mathcal E_{4,3}\leq \sqrt 4 = 2$. However, 
by the averaging trick in the proof of Proposition \ref{prop:case1}, we know $\mathcal E_{4,3}\geq 2$
where the inequality holds if and only if the four vectors $v_1, v_2, v_3, v_4$ are mutually orthogonal. In more details, we have an invariant \[\sum_{\substack{\delta \in \{\pm 1\}^4}} \| \sum_{i=1}^4 \delta_i v_i \|^2 = 4 \times 2^4. \] If $\mathcal E_{4,3} = 2$, by the pigeonhole principle, $\| \sum\limits_{i=1}^4 \delta_i v_i \|^2 = 4$ for all $\delta \in \{\pm 1\}^4$, and $v_i \cdot v_j = 0$ for all $i \neq j$. This contradicts with the setting $p = 3$. Therefore $|M_{\ul{v}}| = 3$, as claimed.

For now, we define the following notation. 
$$M_1:=v_1+v_2+v_3+v_4, ~ M_2:=v_1+v_2-v_3-v_4, ~ M_3:=v_1-v_2+v_3-v_4.$$
By Formula \ref{eqn: non-surjective map} in the proof of Lemma \ref{lem: local minimal}, there exists a vector $(x,y,z) \neq (0,0,0)$ such that 
\begin{equation}\label{eqn: non-surjective, 3d}
x\langle M_1, t_1+t_2+t_3+t_4\rangle+y\langle M_2, t_1+t_2-t_3-t_4\rangle+z\langle M_3, t_1-t_2+t_3-t_4\rangle=0
\end{equation}
 for any $t_i$ in the tangent space of $v_i$ on $S^2$.
Expanding \ref{eqn: non-surjective, 3d} and collecting terms with respect to $t_i$ yields
\begin{equation*}
xM_1+yM_2+zM_3 \parallel v_1, ~ xM_1+yM_2-zM_3 \parallel v_2, ~ xM_1-yM_2+zM_3 \parallel v_3, ~ xM_1-yM_2-zM_3 \parallel v_4,
\end{equation*}
where $v \parallel w$ means that $v$ is parallel to $w$ in the usual Euclidean space.

Setting $a=x+y-z, ~ b=x-y+z, ~ c=x-y-z$, the parallel relationship  is further equivalent to 
$$a v_2+b v_3+c v_4 \parallel v_1, ~ a v_1+c v_3+b v_4 \parallel v_2, ~ b v_1+c v_2+a v_4 \parallel v_3, ~ c v_1+b v_2+a v_3 \parallel v_4.$$
As $\{v_1,v_2,v_3,v_4\}$ spans the whole space $\bR^3$, there exists $(p_1,p_2,p_3,p_4) \neq (0,0,0,0)$ which is unique up to a scalar multiple such that $p_1 v_1+p_2 v_2+p_3 v_3+p_4 v_4=\vec{0}$.
Therefore by the parallel relations,
$$[p_2:p_3:p_4]=[a:b:c], ~ [p_1:p_3:p_4]=[a:c:b],$$
$$[p_1:p_2:p_4]=[b:c:a], ~ [p_1:p_2:p_3]=[c:b:a].$$
From the above relation we can derive that $[a:b:c] \in \{[1:1:1], [1:1:-1], [1:-1:1], [1:-1:-1]\}.$
By the fact that $M_i$ are nonzero vectors, the only possibility is that
$$[a:b:c]=[1:1:-1], ~ v_1-v_2-v_3+v_4=\vec{0}.$$
Combining $\| v_1-v_2-v_3+v_4 \|=0$, together with equation (\ref{e2}) and the fact that $\|v_4 \|=1$, one can solve the inner products of any pairs  of $\{v_1,v_2,v_3,v_4\}$. The Gram matrix $G := (\langle v_i, v_j \rangle)_{1\leq i,j\leq 4}$ can be calculated as:
\[
G = \begin{pmatrix}
1 & 1/3& 1/3& -1/3\\
1/3& 1 & -1/3 & 1/3\\
1/3& -1/3& 1 & 1/3\\
-1/3& 1/3& 1/3& 1\\ 
\end{pmatrix},
\] 
which corresponds to the cube type.

\vspace{5mm}
Otherwise, $M_{\ul{v}}$ satisfies case 2. Suitable relabelling allows us to assume that \hfill\break
$(1,1,1,1), ~ (1,1,-1,-1), ~ (1,-1,-1,-1) \in M_{\ul{v}}$. Similarly, we define
$$M_1:=v_1+v_2+v_3+v_4, ~ M_2:=v_1+v_2-v_3-v_4, ~ M_3:=v_1-v_2-v_3-v_4.$$ 

In contrary to Case 1,  we will show  $|M_{\ul{v}}| \geq 4$ in Case 2.  Suppose $|M_{\ul{v}}|=3$, similar to case 1, Lemma \ref{lem: local minimal} guarantees the existence  of a non-zero vector $(x,y,z)$ such that 
\begin{equation*}
x\langle M_1, t_1+t_2+t_3+t_4\rangle+y\langle M_2, t_1+t_2-t_3-t_4\rangle+z\langle M_3, t_1-t_2-t_3-t_4\rangle=0,
\end{equation*}
 for any $t_i$ in the tangent space of $v_i$ on $S^2$. Equivalently, we have

\begin{equation}\label{eqn: parallel relation, case2}
xM_1+yM_2+zM_3 \parallel v_1, ~ xM_1+yM_2-zM_3 \parallel v_2, ~ xM_1-yM_2-zM_3 \parallel v_3, v_4.
\end{equation}

By our assumption, two lines spanned by $v_3$ and $v_4$ are distinct, hence the third parallel relationship in \ref{eqn: parallel relation, case2} shows 
\begin{equation}\label{eqn:combination equals 0}
xM_1-yM_2-zM_3=\vec{0}.
\end{equation}
Plugging \ref{eqn:combination equals 0} back into the first two parallel relationship in \ref{eqn: parallel relation, case2} shows

 $$xM_1 \parallel v_1, ~ yM_2 \parallel v_2.$$
If $y=0$, one deduces $M_1 \parallel M_3 \parallel v_1$.
Since  $\|M_1\|=\|M_3\|$ by definition,
we have
 \begin{eqnarray*}
 v_2+v_3+v_4=\vec{0}, ~ \|M_1\|=\|M_3\|=1,
\end{eqnarray*}
which is impossible.
Similarly, the case $x=0$ can be ruled out. 
Therefore it suffices to discuss the case where both $x$ and $y$ are nonzero. Since we have
$$M_1 \parallel v_1, ~ M_2 \parallel v_2.$$
Write $M_1 = \lambda_1 v_1$ and $M_2 = \lambda_2 v_2$, we can use the relationship
\[
(1-\lambda_1) v_1 + v_2 + v_3 + v_4 = v_1 + (1-\lambda_2)v_2 -v_3 -v_4 = 0
\]
to solve $\lambda_1 = 2, \lambda_2 = 2$, therefore 
 $$v_3+v_4=v_1-v_2,$$
 and $M_3 =0$, which is also a contradiction. Therefore we know $|M_{\ul{v}}| \geq 4$.

Given $|M_{\ul{v}}| \geq 4$,  we now discuss on the fourth vector in $M_{\ul{v}}$ other than\\ $(1,1,1,1), ~ (1,1,-1,-1), ~ (1,-1,-1,-1) \in M_{\ul{v}}$.  Firstly, if $(1,-1,1,1) \in M_{\ul{v}}$, from 
$$\|M_1\|=\|M_2\|=\|M_3\|=\|v_1-v_2+v_3+v_4\|$$
we have
$$ v_1 \perp v_2 \perp v_3+v_4.$$
It can  be directly checked that $\ul{v}$ is of the triangular pyramid type.
Let $e_3 = v_1 \times v_2$. Suppose $v_3 =  a v_1 + b v_2 + x e_3$, then $v_4 = -a v_1 -b v_2 + x e_3$ for $(a, b, x)\in S^2$.
Therefore $v_3 - v_4 = 2(a v_1 + b v_2)$. We also know $\lVert M_i \rVert^2 = 1^2 + 1^2 + (2x)^2 = 2 + 4x^2$ for any $i$,
by the maximal property of $M_i$, we have, $$\|v_1 + v_2 \pm 2(a v_1 + b v_2)\|^2 = 4a^2 + 4b^2 \pm4(a+b) + 2 \leq 2 + 4x^2 = 2 + 4(1-a^2-b^2)$$
$$\|v_1 - v_2 \pm 2(a v_1 + b v_2)\|^2 = 4a^2 + 4b^2 \pm4(a-b) + 2 \leq 2 + 4x^2 = 2 + 4(1-a^2-b^2).$$
We have $$\max{2(a^2+b^2)\pm(a+b),2(a^2+b^2)\pm(a-b)} \leq 1.$$
Without loss of generality, $a$, $b \geq 0$.
$$2(a^2+b^2)+\sqrt{a^2+b^2} \leq 2(a^2+b^2) + (a+b) \leq 1,$$
hence $a^2 + b^2 \leq \frac{1}{4}$, therefore $l(\ul v)^2 = \lVert M_i \rVert^2 \geq  2 + 4x^2 = 2 + 4 (1 - a^2 - b^2) \geq 2 + 4 \times \frac 34 = 5$, and the inequality attains equality when $\ul v$ is of the triangular pyramid type. In other words, up to reflections and index permutations, the Gram matrix
 $G := (\langle v_i, v_j \rangle)_{1\leq i,j\leq 4}$ is given by:
 \[
 G = \begin{pmatrix}
 1 & 0& 0& 0\\
 0& 1 & 1/2 & -1/2\\
 0& 1/2& 1  & 1/2\\
 0 & -1/2& 1/2& 1
 \end{pmatrix}.
 \]

The remaining cases can be argued using a similar but slightly more complicated way. Suppose that $(1,1,-1,-1) \notin M_{\ul v}$, we know up to equivalence that either $(1,1,1,-1)$ or $(1,-1,1,-1)$ is in $M_{\ul{v}}$.  We will do the case where $(1,-1,1,-1) \in M_{\ul{v}}$ by contradiction, and the other case can be proved in the same way. 
 
Suppose $(1,-1,1,-1) \in M_{\ul{v}}$, we claim that $|M_{\ul{v}}| \geq 5$. Otherwise,  we can again write $M_4:=v_1-v_2+v_3-v_4$.  It can be shown from Lemma \ref{lem: local minimal} that there exists $(x,y,z,w) \neq (0,0,0,0)$ such that 
$$xM_1+yM_2+zM_3+wM_4 \parallel v_1, ~ xM_1+yM_2-zM_3-wM_4 \parallel v_2,$$
$$xM_1-yM_2-zM_3+wM_4 \parallel v_3, ~ xM_1-yM_2-zM_3-wM_4 \parallel v_4.$$
Set $$a=x+y-z-w, ~ b=x+y+z-w, ~ c=x-y+z+w,$$
$$d=x-y+z-w, ~ e=x-y-z+w, ~ f=x-y-z-w.$$
The parallel relations are then translated to
$$av_2+ev_3+fv_4 \parallel v_1, ~ av_1+dv_3+cv_4 \parallel v_2,$$
$$ev_1+dv_2+bv_4 \parallel v_3, ~ fv_1+cv_2+bv_3 \parallel v_4.$$
Using the nondegeneracy of $\{v_1,v_2,v_3,v_4\}$ (there exists a unique vector $(p_1,p_2,p_3,p_4)$  up to a scalar multiple such that $p_1v_1+p_2v_2+p_3v_3+p_4v_4=\vec{0}$), we deduce that
$$ab=ce=df \Rightarrow (x+y-w)^2=(x-y+w)^2=(x-y-w)^2$$
$$\Rightarrow (x,y,w)=(1,1,1) ~\mathrm{or}~ (1,0,0) ~\mathrm{or}~ (0,1,0) ~\mathrm{or}~ (0,0,1)$$ up to a scalar multiple.
If $(x,y,w)=(1,1,1)$, by the parallel relations, we deduce that $$z=0, ~ v_1+v_4=v_2+v_3,$$
which means
$$ M_3=2v_1, ~ |M_3|^2=4, ~ l(\ul{v})=4,$$
we have discussed in Proposition \ref{prop:case1} that  this is equivalent to the case that $v_i$'s are orthogonal to each other, yielding a contradiction.
If $(x,y,w)=(1,0,0)$ (and similarly for the rest two cases),
$$M_1-zM_3=\vec{0}, ~ M_1 \parallel M_3 \parallel v_1.$$
Since $\|M_1\|=\|M_3\|$, $v_2+v_3+v_4=\vec{0}$, we conclude $l(\ul{v})=\|M_1\|=\|M_3\|=1$, which is also a contradiction.

Given $|M_{\ul{v}}| \geq 5$, $\{(1,1,1,1),(1,1,-1,-1),(1,-1,-1,-1),(1,-1,1,-1)\} \subset M_{\ul{v}}$, and $(1,-1,-1,1) \notin M_{\ul{v}}$ (otherwise $v_i$'s are all orthogonal to each other).
In all the other cases, $M_{\ul{v}}$ contains four elements $\gamma, \delta, \sigma, \tau$ such that $\gamma_{i_1}=\gamma_{i_2}, ~ \delta_{i_1}=\delta_{i_2}, ~ \sigma_{i_1}=\sigma_{i_2}, ~ \tau_{i_1}=\tau_{i_2}$ for two different indices $1 \leq i_1 \neq i_2 \leq 4$, reducing to the situation that $v_1 \perp v_2 \perp v_3+v_4$, and as discussed before, corresponding to the triangular pyramid type.
\end{proof}

We conclude the proof of Proposition \ref{prop: case3} by showing Claim \ref{claim:cardinality}.

\begin{proof}[Proof of Claim \ref{claim:cardinality}]
First, we claim $|M_{\ul{v}}| \geq 2$. Suppose the contrary, since Lemma \ref{lem: local minimal} shows the linear map from $\prod\limits_{i=1}^n T_i$ to $\bR^{|M_{\ul{v}}|} = \bR$ is not surjective, we immediately have  $D$ is the zero map, a clear contradiction.

Suppose $|M_{\ul v}| = 2$ and $M_{\ul v} = \{\alpha, \beta\}$. The case where $\{V_\alpha, V_\beta\}$ is linear independent has already been excluded by Lemma \ref{lem: local minimal} as well, since it is shown that the non-zero linear combinations of $\{V_\alpha, V_\beta\}$ will generates $\spn\{v_1, v_2, v_3, v_4\}$, which is of dimension $3$, a contradiction. 

It only remains to discuss the case where $\{V_\alpha, V_\beta\}$ is linearly dependent. Since $\| V_\alpha \| = \|V_\beta\|$,  we have $V_\alpha = \pm V_\beta$. We can assume $V_\alpha = V_\beta$ as otherwise we may simply choose $M_{\ul v} = \{\alpha, -\beta\}$. Again, after suitable relabelling we can  assume $V_\alpha = v_1 + v_2 + v_3 + v_4$, and $V_\beta$ is either $v_1 + v_2 + v_3 - v_4$ or $v_1 + v_2 - v_3 - v_4$ or $v_1 - v_2 - v_3 - v_4$.
For the first case $v_4 = 0$. For the second case $v_3$, $v_4$ spans the same line (contradicts with the setting of Proposition \ref{prop: case3}). For the third case $V_\beta = v_1$ with unit length which contradicts with Proposition \ref{prop:case1}. All the cases are excluded and we conclude $|M_{\ul v}|\geq 3$.
\end{proof}

Combining Proposition \ref{prop:case1}, \ref{prop:case2}, \ref{prop: case3}, the following theorem is immediate. 

\begin{thm}\label{thm: exact design}
	The minimal energy defined in (\ref{eqn: minimal energy}) and the corresponding spherical minimax design can be explicitly derived in the following three cases:
	
	\begin{itemize}
	\item
\textit{Case 1: $ p \geq n$}, the minimax design is the set of $n$ mutually orthogonal vectors  with $\mathcal E_{n,p} = \sqrt{n}$.
	\item Case 2: $p = 2, n$, the minimax design is  the evenly spaced points on the upper semi-circle with $\mathcal{E}_{n,2} = \sin^{-1}(\frac{\pi}{2n})$,
	\item  Case 3: $p = 3, n = 4$, the minimax design is of the  so-called triangular pyramid type (see Definition \ref{def: triangular pryamid}) with $\mathcal{E}_{3,4} = \sqrt {5}$.
\end{itemize}
\end{thm}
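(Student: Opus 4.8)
The plan is to assemble Theorem~\ref{thm: exact design} directly from the three propositions already established, since each of its three bullet points is a verbatim restatement of one of them. The only care needed is in two pieces of bookkeeping: confirming that the claimed optimal configuration is actually realizable as $n$ unit vectors in $\bR^p$, and translating between the squared functional $l(\ul v)=\max_{\delta}\lVert\sum_i\delta_i v_i\rVert^2$ used in some statements and the energy $\ener$ itself (which differs by a square root). No genuinely new argument is required; all the substance lives in the supporting results.

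First I would handle Case 1. Proposition~\ref{prop:case1} supplies the universal lower bound $\ener\geq\sqrt n$ together with its equality characterization: equality forces $n\leq p$ with $u_1,\dots,u_n$ pairwise orthogonal. When $p\geq n$ such an orthonormal family exists inside $\bR^p$, so the bound is attained; hence $\mathcal E_{n,p}=\sqrt n$ and the minimizers are exactly the sets of $n$ mutually orthogonal unit vectors. Case 2 is then immediate, since Proposition~\ref{prop:case2} already computes $\mathcal E_{n,2}=\sin^{-1}(\tfrac{\pi}{2n})$ and identifies the minimizers as the evenly spaced points on the upper semicircle (equivalently $\{1,e^{\pi i/n},\dots\}$ up to an $\mathrm{SO}_2(\bR)$ rotation).

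For Case 3 I would invoke Proposition~\ref{prop: case3}, which is phrased in terms of $l$. It establishes that over $(S^2)^4$ the minimum of $l$ equals $5$ and is attained precisely at the triangular pyramid type; taking square roots gives $\mathcal E_{4,3}=\sqrt 5$ with the triangular pyramid configuration as the minimizer. Here I would emphasize that the potentially degenerate configurations are already disposed of inside Proposition~\ref{prop: case3}: when $\dim\spn\{v_1,\dots,v_4\}<3$ the problem collapses to Case~2 and yields the strictly larger value $\sin^{-2}(\pi/8)\approx 6.83$, and when $v_i=\pm v_j$ the value is $6$, so neither can be optimal. Consequently the global minimum is genuinely achieved by a bona fide triangular pyramid, and the cube type (value $\tfrac{16}{3}$) is only a local minimizer.

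Since the entire mathematical content is carried by Propositions~\ref{prop:case1}--\ref{prop: case3} and their supporting Lemmas~\ref{lem: discrete optimization}, \ref{lem: cos global max}, and \ref{lem: local minimal}, I do not expect any real obstacle in the assembly. The only pitfalls are the two bookkeeping points noted above—verifying feasibility of the extremal configuration in Case~1 (which needs $p\geq n$) and the square-root conversion from $l$ to $\ener$ in Case~3—both of which are routine.
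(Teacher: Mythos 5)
Your proposal is correct and matches the paper's own proof, which simply combines Propositions~\ref{prop:case1}, \ref{prop:case2}, and \ref{prop: case3}; your two bookkeeping remarks (realizability of the orthonormal family when $p\geq n$, and the square-root passage from $l$ to $\ener$ in Case~3) are exactly the routine details the paper leaves implicit.
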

\begin{proof}
	 Combining  proposition \ref{prop:case1}, \ref{prop:case2}, and \ref{prop: case3} and Theorem \ref{thm: exact design} automatically follows.
\end{proof}

Alas, we find our method very difficult to generalize to  other cases such as $p = 3$ and $n = 5$. It seems that finding the exact minimax spherical designs for general $n, p$ is a particularly challenging task. Instead of giving the exact results, we will study the asymptotic behaviors of $\ener$ in the next section.

\section{Asymptotic Results}\label{sec: asymptotic energy}

We are interested in the asymptotic behavior of the  quantity:

\begin{align}
\ener &:= \min_{\ul{u} \in (\Sp)^n} \Enp(\ul{u}) =  \min_{u_1, \cdots, u_n \in \Sp}   \max_{v\in \Sp} \sum_{i = 1}^n \lvert u_i \cdot v\rvert
\end{align}
under different regimes. We assume $n > p$ henceforth as otherwise the problem is solved in Section \ref{sec: exact designs}, Case 1. Before stating and proving our main results, we introduce two auxiliary lemmas. The first lemma shows some basic properties of a random variable uniformly distributed on $\Sp$.

\begin{lem}[Distribution of the first coordinate on the $p$-sphere]\label{lem: distribution, uniform sphere}
	Let $v$ be a random variable which is uniformly distributed on the $\Sp$, then the first coordinate $v_1$ has the following probability density function on $[-1,1]:$  
	\begin{align}\label{eqn: first coordinate distribution}
	f_{v_1}(s) = \frac{(1-s^2)^{\frac p 2 - \frac 32}}{B(\frac {p-1}2, \frac 12)}, 
	\end{align}
	where $B$ is the Beta function.
	Moreover, for any fixed $u\in \Sp$, 
	\begin{align}\label{eqn: first coordinate expectation}
	\bE_{v \sim \Unif(\Sp)}\lvert v\cdot u \rvert = 	\bE_{v \sim \Unif(\Sp)}\lvert v_1 \rvert = \frac{\Gamma(p/2)}{\sqrt{\pi}   \Gamma((p+1)/2)}  
	\end{align}
\end{lem}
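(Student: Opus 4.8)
The plan is to treat the two claims separately, first deriving the density \eqref{eqn: first coordinate distribution} and then using it to evaluate the expectation \eqref{eqn: first coordinate expectation}.

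For the density, I would argue geometrically by slicing $\Sp$ with hyperplanes orthogonal to $e_1$. Writing a point as $v = (\cos\phi,\ \sin\phi\,\omega)$ with polar angle $\phi \in [0,\pi]$ and $\omega \in S^{p-2}$, the slice at angle $\phi$ is a copy of $S^{p-2}$ of radius $\sin\phi$, whose surface measure scales like $(\sin\phi)^{p-2}$; integrating the arc-length element $d\phi$ against it shows that the surface measure, pushed forward to the $\phi$-variable, is proportional to $(\sin\phi)^{p-2}\,d\phi$. Substituting $s = \cos\phi$, so that $ds = -\sin\phi\,d\phi$, converts this to a density proportional to $(\sin\phi)^{p-3} = (1-s^2)^{p/2-3/2}$ on $[-1,1]$. (Equivalently, one may realize $v = g/\|g\|$ for a standard Gaussian vector $g$ and recall that $v_1^2 \sim \mathrm{Beta}(1/2,(p-1)/2)$; both routes produce the same exponent.) It then remains to identify the normalizing constant, i.e. to check
\[
\int_{-1}^1 (1-s^2)^{p/2-3/2}\,ds = B\left(\tfrac{p-1}{2},\tfrac12\right),
\]
which is a standard Beta integral: the substitution $t = s^2$ turns the left-hand side into $\int_0^1 t^{-1/2}(1-t)^{(p-3)/2}\,dt$, exactly $B(1/2,(p-1)/2)$. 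This gives \eqref{eqn: first coordinate distribution}.

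For the expectation, I would first invoke rotational invariance of $\Unif(\Sp)$: for any fixed unit vector $u$ there is a rotation carrying $u$ to $e_1$, so $v\cdot u$ and $v_1$ have the same law and $\bE|v\cdot u| = \bE|v_1|$. Integrating against the density then gives
\[
\bE|v_1| = \frac{1}{B(\frac{p-1}{2},\frac12)}\int_{-1}^1 |s|\,(1-s^2)^{p/2-3/2}\,ds = \frac{2}{B(\frac{p-1}{2},\frac12)}\int_0^1 s\,(1-s^2)^{p/2-3/2}\,ds.
\]
The substitution $w = 1-s^2$ evaluates the last integral to $\frac{1}{p-1}$, and rewriting $B(\frac{p-1}{2},\frac12) = \Gamma(\frac{p-1}{2})\sqrt\pi/\Gamma(\frac{p}{2})$ together with the recursion $\Gamma(\frac{p+1}{2}) = \frac{p-1}{2}\Gamma(\frac{p-1}{2})$ collapses the constant to the claimed value $\Gamma(p/2)/(\sqrt\pi\,\Gamma((p+1)/2))$.

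There is no genuinely deep obstacle here; the statement is a computation. The only points needing care are bookkeeping: tracking the Jacobian $1/\sin\phi$ correctly when passing from the angular variable to $s$ (a slip there corrupts the exponent $p/2-3/2$), and matching the Beta- and Gamma-function normalizations so that the final constant simplifies exactly. I would double-check the Gamma recursion step in particular, since that is precisely where the $\Gamma((p+1)/2)$ appearing in the denominator of \eqref{eqn: first coordinate expectation} is generated.
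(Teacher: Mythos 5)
Your argument is correct, and it reaches the density by a different route than the paper. The paper realizes $v=Z/\lVert Z\rVert$ for a standard Gaussian vector $Z$, writes $\bP(\lvert v_1\rvert\le s)$ as a tail probability of the ratio $\sum_{i\ge 2}Z_i^2/Z_1^2$, identifies this with the CDF of an $F_{p-1,1}$ distribution, and differentiates to extract \eqref{eqn: first coordinate distribution}; your main derivation instead slices $\Sp$ by hyperplanes orthogonal to $e_1$ and tracks the Jacobian of $s=\cos\phi$ directly, with the Gaussian/Beta representation only mentioned parenthetically. Both are standard and both yield the same exponent; the paper's route outsources the measure-theoretic bookkeeping to a known distribution, while yours is self-contained and makes the origin of the factor $(1-s^2)^{p/2-3/2}$ geometrically transparent, at the cost of having to be careful with the $1/\sin\phi$ Jacobian (which you handle correctly). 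For the expectation, both proofs use rotational invariance identically to reduce $\bE\lvert v\cdot u\rvert$ to $\bE\lvert v_1\rvert$; the paper then dismisses the remaining integral as straightforward, whereas you carry it out — the substitution $w=1-s^2$ giving $1/(p-1)$ and the Gamma recursion $\Gamma(\tfrac{p+1}{2})=\tfrac{p-1}{2}\Gamma(\tfrac{p-1}{2})$ are exactly right and produce the stated constant. No gaps.
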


\begin{proof}
	For the first part, let $Z_1, Z_2, \cdots, Z_p$ be independent and identically distributed (i.i.d.)  standard normal random variables. It is well known that the following random vector:
	\[
	\bigg(\frac{Z_1}{\sqrt{\sum_{i = 1}^p Z_i^2}}, \frac{Z_2}{\sqrt{\sum_{i = 1}^p Z_i^2}} \cdots, \frac{Z_p}{\sqrt{\sum_{i = 1}^p Z_i^2}}\bigg)^\intercal
	\]
	is uniformly distributed on the sphere. Therefore, 
	\begin{align}\label{eqn:F-distribution}
	\bP\bigg(\bigg\lvert \frac {Z_1}{\sqrt{\sum_{i = 1}^p Z_i^2}}\bigg\rvert\leq s\bigg) = \bP\bigg( \frac {\sum_{i = 2}^p Z_i^2} {Z_1^2}\geq \frac 1{s^2} - 1\bigg) =  \bP\bigg( \frac {\sum_{i = 2}^p Z_i^2} {(p-1)Z_1^2}\geq \frac{( 1/{s^2} - 1)}{p-1}\bigg),
	\end{align}
	where the RHS of (\ref{eqn:F-distribution}) can be expressed by the CDF of the $F_{p-1,1}$ distribution which has known density function. Taking the derivative of (\ref{eqn:F-distribution}) with respect to $s$ and (\ref{eqn: first coordinate distribution})  follows.
	
	To prove (\ref{eqn: first coordinate expectation}), we observe that the uniform distribution on $\Sp$ is rotational invariant, therefore the quantity $\bE_{v \sim \Unif(\Sp)}\lvert v\cdot u \rvert$ does not depend on $u$. We may simply choose $u = e_1 = (1, 0, \cdots,0)^\intercal$ which proves the first equality of (\ref{eqn: first coordinate expectation}). The second equality of (\ref{eqn: first coordinate expectation}) are straightforward.
\end{proof}

Let $\sigma_p$ be the standard Euclidean Lebesgue measure on the unit sphere $\Sp$. Let $\{\cR_1, \cR_2, \cdots, \cR_n\}$ be a disjoint collection  such that $\cR_i \subset \Sp$ for each $i$. The collection is called an \textit{area-regular partition} if $\cup_i \cR_i = \Sp$ and $\sigma_p(\cR_i) = \frac{\sigma_p(\Sp)}{n} $ for every $i$.

The second auxiliary lemma is about the area regular partitions of $\Sp$, see \cite{bourgain1988distribution} \cite{kuijlaars1998asymptotics} for proofs.
\begin{lem}[Area-regular partition]\label{lem: area-regular partition}
	For each $n, p \in \bN$, there exists an		area-regular partition $\{\cR_1, \cR_2, \cdots, \cR_n\}$ of the unit sphere $\Sp$ such that:
	$$\max_{i} \text{diam~} \cR_i \leq C_p n^{-1/p},$$
	where $C_p$ is a constant  depending only on $p$, $\text{diam~} \cR_i := \max_{x,y\in \cR_i} \|x-y\|$.
\end{lem}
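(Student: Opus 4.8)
The plan is to construct the partition by induction on the dimension $p-1$ of the sphere, and in fact to prove the sharper bound $\max_i \operatorname{diam} \cR_i \le C_p\, n^{-1/(p-1)}$; this suffices, since $n^{-1/(p-1)} \le n^{-1/p}$ for all $n \ge 1$. Small $n$ is trivial (take $\cR_1 = \Sp$ and enlarge $C_p$ to cover finitely many cases), so I may assume $n$ large. The base case $p=2$ is the circle $S^1$ cut into $n$ equal arcs, each of chord-diameter $2\sin(\pi/n) \le 2\pi/n \le C\,n^{-1}$.

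For the inductive step ($p \ge 3$) I would write a point of $\Sp$ as $(\cos\theta,\ \sin\theta\,\omega)$ with polar angle $\theta \in [0,\pi]$ and $\omega \in S^{p-2}$, so that $d\sigma_p = \sin^{p-2}\theta\, d\theta\, d\sigma_{p-1}(\omega)$, writing $A := \sigma_p(\Sp)/n$ for the target cell area. First I carve off a north and a south polar cap, each of area exactly $A$; since a cap of angular radius $\theta_0$ has area $\asymp \theta_0^{\,p-1}$, this forces $\theta_0 \asymp n^{-1/(p-1)}$, and each cap then has diameter $O(\theta_0)=O(n^{-1/(p-1)})$. The remaining middle zone I slice into collars $\{\phi_j \le \theta \le \phi_{j+1}\}$ of angular width $w \asymp n^{-1/(p-1)}$, and I subdivide each collar into $m_j$ cells by partitioning its $S^{p-2}$-factor into $m_j$ equal-area pieces using the lemma one dimension lower; each resulting cell then has area $(\text{collar area})/m_j$, which is $A$ once the collar is given area $m_j A$.

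The computation that makes all diameters uniform, and that I expect to be the heart of the argument, is the following cancellation. A collar at angle $\theta$ of width $w$ has area $\asymp \sin^{p-2}\theta\cdot w$, so it must hold $m_j \asymp \sin^{p-2}\theta\cdot w/A \asymp \sin^{p-2}\theta\cdot n^{(p-2)/(p-1)}$ cells; by the induction hypothesis each $S^{p-2}$-piece has diameter $\lesssim m_j^{-1/(p-2)} \asymp \sin^{-1}\theta\cdot n^{-1/(p-1)}$. Since that piece is scaled by the factor $\sin\theta$ inside the embedding, the horizontal extent of a cell is $\sin\theta\cdot m_j^{-1/(p-2)} \asymp n^{-1/(p-1)}$ --- the $\sin\theta$ factors cancel exactly. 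Together with the vertical extent $\le w \asymp n^{-1/(p-1)}$, the triangle inequality bounds each cell's Euclidean diameter by $O(n^{-1/(p-1)})$. This same computation shows $m_j \ge 1$ remains achievable right down to angle $\theta \asymp n^{-1/(p-1)}$, which is precisely why the two polar caps (where $\sin\theta$ is too small) must be removed and handled separately.

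The main obstacle is reconciling the integrality of the $m_j$ with the requirement that every one of the $n$ regions have area \emph{exactly} $A$. I would resolve this greedily: fix the nominal width $w\asymp n^{-1/(p-1)}$, round the ideal (real) cell count to the nearest integer $m_j\ge 1$, and then adjust $\phi_{j+1}$ by the intermediate value theorem so that the collar has area exactly $m_jA$; rounding changes each width only by a bounded factor, so all the estimates above survive. The one genuinely fussy point is closing the construction up so that the collars, together with the two caps, account for exactly $n$ equal-area regions and terminate precisely at the south cap; this can be forced by letting the final collar (or a slight perturbation of the south-cap radius) absorb an $O(1)$ discrepancy, which inflates diameters only by a constant. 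Folding this constant, together with the induction constant $C_{p-1}$, into a new $C_p$ completes the step.
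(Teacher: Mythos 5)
Your construction is correct, and it is worth noting that the paper itself gives no proof of this lemma at all: it simply cites \cite{bourgain1988distribution} and \cite{kuijlaars1998asymptotics}. What you have written is essentially the standard cap-and-collar construction from that literature (Rakhmanov--Saff--Zhou, Leopardi), carried out by induction on the dimension of the sphere, and you even obtain the sharper and essentially optimal exponent $n^{-1/(p-1)}$, which indeed dominates the stated $n^{-1/p}$ for $n\geq 1$. The key computation checks out: with collar width $w\asymp n^{-1/(p-1)}$ the cell count per collar is $m_j\asymp \sin^{p-2}\theta\cdot n^{(p-2)/(p-1)}$, the inductive diameter bound on the $S^{p-2}$-factor is $C_{p-1}m_j^{-1/(p-2)}\asymp \sin^{-1}\theta\cdot n^{-1/(p-1)}$, and the embedding factor $\sin\theta$ cancels this exactly, so every cell has Euclidean diameter $O(w+\sin\theta\cdot\operatorname{diam}P)=O(n^{-1/(p-1)})$; the caps handle the region where $m_j\geq 1$ would fail. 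The one place your write-up remains a sketch is the exact-area bookkeeping at the closing of the induction, but your stated reason it works is the correct one: since the area of a polar cap of angular radius $\theta_0$ is $\asymp\theta_0^{p-1}$, any $O(1)$ cells' worth of leftover area near the south pole is automatically confined to an angular band of width $O(n^{-1/(p-1)})$, so absorbing the rounding discrepancy there costs only a constant factor. Writing out that greedy adjustment carefully (choose each $\phi_{j+1}$ by the intermediate value theorem so the collar area is an integer multiple of $A$, and verify the widths stay within a bounded factor of nominal) would make this a complete, self-contained proof, which is more than the paper provides.
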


With all the lemmas in hand, now we are ready to prove the asymptotic results of $\ener$. We first consider the case that $p$ is a fixed positive integer and $n$ goes to infinity.

\begin{thm}[Asymptotics for  $p$ fixed, $n\rightarrow \infty$]\label{thm: asymptotics, fix p}
	With all the notations as above, we have the following: 
	\begin{align} \label{eqn: e(n,p)}
	\frac{\Gamma(p/2)}{\sqrt{\pi}   \Gamma((p+1)/2)} \leq & \frac{\ener}{n}\leq \frac{\Gamma(p/2)}{\sqrt{\pi}   \Gamma((p+1)/2)}  + C_p n^{-\frac 1p}.
	\end{align}
\end{thm}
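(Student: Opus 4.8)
The plan is to recognize the constant $\frac{\Gamma(p/2)}{\sqrt{\pi}\,\Gamma((p+1)/2)}$ as the quantity $c_p := \bE_{v\sim\Unif(\Sp)}\lvert v\cdot u\rvert$ supplied by Lemma~\ref{lem: distribution, uniform sphere}, and then to prove the two inequalities by completely separate arguments: the lower bound by a one-line averaging (``max $\geq$ mean'') estimate valid for \emph{every} configuration, and the upper bound by exhibiting a single good configuration built from an area-regular partition.

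For the lower bound, fix any $\ul u=(u_1,\dots,u_n)\in(\Sp)^n$. Since the maximum over $v\in\Sp$ dominates the average against the uniform measure, I would write
$$\Enp(\ul u)=\max_{v\in\Sp}\sum_{i=1}^n\lvert u_i\cdot v\rvert\ \geq\ \bE_{v\sim\Unif(\Sp)}\sum_{i=1}^n\lvert u_i\cdot v\rvert=\sum_{i=1}^n\bE_{v}\lvert u_i\cdot v\rvert = n c_p,$$
where the last equality is exactly \eqref{eqn: first coordinate expectation} applied to each $u_i$. As this holds for all $\ul u$, taking the infimum gives $\ener\geq n c_p$, which is the left inequality.

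For the upper bound, I invoke Lemma~\ref{lem: area-regular partition} to obtain an area-regular partition $\{\cR_1,\dots,\cR_n\}$ of $\Sp$ with $\operatorname{diam}\cR_i\leq C_p n^{-1/p}$, and I select one representative point $u_i\in\cR_i$ for each $i$; this defines a configuration $\ul u$. The key observation is that for every fixed $v\in\Sp$ the discrete sum $\sum_i\lvert u_i\cdot v\rvert$ is a Riemann-sum approximation of $\frac{n}{\sigma_p(\Sp)}\int_{\Sp}\lvert u\cdot v\rvert\,d\sigma_p(u)=n c_p$. Indeed, using $\sigma_p(\cR_i)=\sigma_p(\Sp)/n$ I would bound the per-region error by the $1$-Lipschitz property of $u\mapsto\lvert u\cdot v\rvert$:
$$\Big\lvert\,\lvert u_i\cdot v\rvert-\tfrac{1}{\sigma_p(\cR_i)}\!\int_{\cR_i}\!\lvert u\cdot v\rvert\,d\sigma_p(u)\Big\rvert\leq\sup_{u\in\cR_i}\lvert(u_i-u)\cdot v\rvert\leq\operatorname{diam}\cR_i\leq C_p n^{-1/p},$$
using $\lVert v\rVert=1$ and $\big\lvert\lvert a\rvert-\lvert b\rvert\big\rvert\leq\lvert a-b\rvert$. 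Summing over the $n$ regions gives $\big\lvert\sum_i\lvert u_i\cdot v\rvert-n c_p\big\rvert\leq C_p n^{1-1/p}$, hence $\sum_i\lvert u_i\cdot v\rvert\leq n c_p+C_p n^{1-1/p}$. Since this bound is \emph{uniform in} $v$, taking the maximum over $v$ yields $\Enp(\ul u)\leq n c_p+C_p n^{1-1/p}$, and therefore $\ener\leq\Enp(\ul u)\leq n c_p+C_p n^{1-1/p}$, which is the right inequality after dividing by $n$.

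The main point requiring care is not any single estimate but the fact that the pointwise approximation in the upper bound must hold uniformly over all $v\in\Sp$ simultaneously, since $\ener$ is a maximum over $v$; this is precisely what the diameter bound buys us, because the Lipschitz constant of $u\mapsto\lvert u\cdot v\rvert$ equals $\lVert v\rVert=1$ independent of $v$, so the per-region error $C_p n^{-1/p}$ never degrades as $v$ ranges over the sphere. The only other mild subtlety is bookkeeping the constant $C_p$, which is inherited directly from Lemma~\ref{lem: area-regular partition}.
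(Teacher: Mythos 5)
Your proposal is correct and follows essentially the same route as the paper's own proof: the lower bound via the ``max $\geq$ mean'' averaging against the uniform measure combined with Lemma~\ref{lem: distribution, uniform sphere}, and the upper bound via a representative configuration from the area-regular partition of Lemma~\ref{lem: area-regular partition} with a per-region Lipschitz/diameter estimate that is uniform in $v$. The only difference is presentational—you make the uniformity in $v$ explicit up front, whereas the paper fixes $v_0$ and takes the supremum at the end—but the substance is identical.
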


The above result shows $\ener$ grows linearly with $n$ at the rate of $\frac{\Gamma(p/2)}{\sqrt{\pi}   \Gamma((p+1)/2)}$. The proof relies on a probabilistic argument. More precisely, we aim to show the following:

\[
\ener \approx \bE_{v, \ul{u}}(\sum_{i = 1}^n \lvert u_i\cdot v \rvert),
\]
where $v, u_1, \cdots, u_n$ are independent and identically distributed (i.i.d.) uniform random variables on $\Sp$.

\begin{proof}
	We start with proving the lower bound of (\ref{eqn: e(n,p)}), observe that for any $\ul u \in (\Sp)^n$, 
	\[
	\Enp(\ul u) = \max_{v\in \Sp} \sum_{i = 1}^n \lvert u_i \cdot v\rvert \geq \bE_{v \sim \Unif(\Sp)} (\sum_{i = 1}^n \lvert u_i \cdot v \rvert) = \sum_{i = 1}^n\bE_{v \sim \Unif(\Sp)}(\lvert u_i \cdot v\rvert) .
	\]
	In view of Lemma \ref{lem: distribution, uniform sphere}, we have:
	\[
	\bE_{v \sim \Unif(\Sp)}  \lvert u_i \cdot v \rvert = \bE_{v \sim \Unif(\Sp)}  \lvert e_1 \cdot v \rvert = \frac{\Gamma(p/2)}{\sqrt{\pi}   \Gamma((p+1)/2)},
	\]
	where $e_1 = (1, 0, \cdots, 0)^\intercal$. It is then clear that 
	\[
	\Enp(\ul u) \geq n \frac{\Gamma(p/2)}{\sqrt{\pi}   \Gamma((p+1)/2)},
	\]
	for any $\ul u$.
	Taking infimum over $\ul u \in (\Sp)^n$ yields
	\[
	\ener \geq n \frac{\Gamma(p/2)}{\sqrt{\pi}   \Gamma((p+1)/2)},
	\]
	which proves the LHS of (\ref{eqn: e(n,p)}).

	To prove the RHS of \ref{eqn: e(n,p)}, let $\{\cR_1, \cR_2, \cdots, \cR_n \}$ be the area-regular partition given by Lemma \ref{lem: area-regular partition}.  For each $i$, we pick an arbitrary $u_i \in \cR_i$. Then it is clear that
	\begin{align}
	\ener \leq \max_{v\in \Sp} \sum_{i = 1}^n \lvert u_i \cdot v\rvert. 
	\end{align}
	On the other hand, for each fixed $v_0\in \Sp$,
	\begin{align}\label{eqn: v(n,p), upper bound}
	\bE_{w\sim \Unif(\Sp)}	\lvert w\cdot v_0\rvert & = \frac{\int_{\Sp} |w\cdot v_0|  \sigma_p(dw) } {\sigma_p(\Sp)} = \frac{\sum_{i=1}^n\int_{\cR_i} |w\cdot v_0|  \sigma_p(dw) } {\sigma_p(\Sp)}\\
	& = \frac{\sum_{i=1}^n \bE_{w\sim \Unif(\cR_i)}\lvert w\cdot v_0 \rvert }{n}.
	\end{align}
	For each $w\in \cR_i$,  we have:
	\begin{align*}
	\bigl\lvert \lvert w\cdot v_0 \rvert  - \lvert u_i \cdot v_0 \rvert \bigr\rvert \leq \lvert (w-u_i)\cdot v_0 \rvert \leq \text{diam~} \cR_i \leq C_p n^{-1/p}
	\end{align*}
	in view of the triangle inequality and Cauchy-Schwarz inequality. Therefore, 
	\begin{align*}
	\lvert u_i \cdot v_0\rvert \leq \bE_{w\sim \Unif(\cR_i)}\lvert w\cdot v_0 \rvert + C_p n^{-1/p},
	\end{align*}
	and $\sum_{i=1}^n \lvert u_i \cdot v_0\rvert $ can be upper bounded by
	\begin{align*}
	\sum_{i=1}^n \lvert u_i \cdot v_0\rvert &\leq \big(\sum_{i=1}^n \bE_{w\sim \Unif(\cR_i)}\lvert w\cdot v_0 \rvert\big) + C_p n^{(p-1)/p} \\
	& =  n  \bE_{w\sim \Unif(\Sp)}	\lvert w\cdot v_0\rvert  + C_p n^{(p-1)/p}\\
	& = n\cdot \bigg( \frac{\Gamma(p/2)}{\sqrt{\pi}   \Gamma((p+1)/2)}  + C_pn^{-\frac 1p}\bigg).
	\end{align*}
	The above inequality holds for every $v_0\in \Sp$, thus taking supremum over $v_0$ yields
	\begin{align*}
	\ener\leq \Enp(\ul u) = \max_{v\in \Sp} \sum_{i = 1}^n \lvert u_i \cdot v\rvert \leq n\cdot \bigg( \frac{\Gamma(p/2)}{\sqrt{\pi}   \Gamma((p+1)/2)}  + C_p(n^{-\frac 1p})\bigg),
	\end{align*}
	which completes the proof of the RHS of (\ref{eqn: e(n,p)}). 
\end{proof}

The above proof also gives us the construction of an asymptotically minimax design. The next corollary is immediate.

\begin{cor}\label{cor: asymptotic minimax design}
	Let $p$ be fixed, and $\{\cR_1, \cR_2, \cdots, \cR_n \}$ be an area-regular partition of $\Sp$ given by Lemma \ref{lem: area-regular partition}.  For each $i$, we pick an  $u_i \in \cR_i$ uniformly. Then $\ul {u^\star} := (u_1, u_2, \cdots, u_n)$ is an asymptotically minimax design. In other words, $\Enp(\ul {u^\star}) \rightarrow \ener$ as $n\rightarrow \infty$.
\end{cor}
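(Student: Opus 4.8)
The plan is to read this statement off directly from the two-sided estimate established in Theorem \ref{thm: asymptotics, fix p}, observing that the configuration $\ul{u^\star}$ in the corollary is \emph{precisely} the one used to prove the upper bound (the right-hand side) of \eqref{eqn: e(n,p)}. Before starting I would clarify the meaning of the conclusion: since both $\Enp(\ul{u^\star})$ and $\ener$ grow linearly in $n$, the assertion ``$\Enp(\ul{u^\star}) \to \ener$'' must be read as the asymptotic equivalence $\Enp(\ul{u^\star}) \sim \ener$ (equivalently $\Enp(\ul{u^\star})/\ener \to 1$), which is exactly what ``asymptotically minimax'' should mean in this regime.

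Writing $c_p := \frac{\Gamma(p/2)}{\sqrt{\pi}\,\Gamma((p+1)/2)}$ for brevity, the first step is to apply the general lower bound from the proof of Theorem \ref{thm: asymptotics, fix p} --- namely $\Enp(\ul u) \geq n c_p$ for \emph{every} $\ul u \in (\Sp)^n$, obtained by replacing the maximum over $v$ with the average against $\Unif(\Sp)$ --- to the specific configuration $\ul{u^\star}$, which gives $\Enp(\ul{u^\star}) \geq n c_p$. The second step is to invoke the upper-bound computation in that same proof, which bounded $\max_{v\in\Sp}\sum_i \lvert u_i \cdot v\rvert$ for an arbitrary choice of representatives $u_i \in \cR_i$ of the area-regular partition supplied by Lemma \ref{lem: area-regular partition}; since $\ul{u^\star}$ is built in exactly this way, the identical chain of inequalities (triangle inequality, Cauchy--Schwarz, and the diameter control $\mathrm{diam}\,\cR_i \leq C_p n^{-1/p}$) yields $\Enp(\ul{u^\star}) \leq n\bigl(c_p + C_p n^{-1/p}\bigr)$.

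Combining the two steps sandwiches $\Enp(\ul{u^\star})$ between $n c_p$ and $n c_p + C_p n^{(p-1)/p}$, and the identical sandwich holds for $\ener$ by \eqref{eqn: e(n,p)}. Dividing by $n$ shows that both $\Enp(\ul{u^\star})/n$ and $\ener/n$ converge to $c_p$, while subtracting gives $0 \leq \Enp(\ul{u^\star}) - \ener \leq C_p n^{(p-1)/p} = o(n)$, from which $\Enp(\ul{u^\star})/\ener \to 1$ follows immediately. There is no genuine obstacle here --- the corollary really is immediate --- so the only point requiring care is the interpretation noted above: the convergence is of the ratio (or of the average energy per point), not a literal convergence of two diverging quantities. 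A secondary point worth stating explicitly is that the upper bound is \emph{uniform} over all admissible selections $u_i \in \cR_i$, so the conclusion holds however each representative is chosen within its cell.
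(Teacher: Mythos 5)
Your proposal is correct and follows exactly the paper's route: the paper states the corollary as immediate from the proof of Theorem \ref{thm: asymptotics, fix p}, since the configuration in the corollary is precisely the one used there to establish the upper bound, and the uniform lower bound $\Enp(\ul u)\geq n\,\Gamma(p/2)/(\sqrt{\pi}\,\Gamma((p+1)/2))$ supplies the matching side of the sandwich. Your added remark that the convergence must be read as $\Enp(\ul{u^\star})/\ener\to 1$ (rather than literal convergence of two diverging quantities) is a sensible clarification of what the paper leaves implicit.
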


If we allow both $n,p$ to be arbitrarily large, the next result shows $\ener$ is always at the magnitude of $\Theta(\frac n{\sqrt p})$.

\begin{thm}\label{thm: asymptotics, arbitrary p}
	Let $n,p$ be two arbitrary  positive integers with $n > p$, 
	\begin{align}\label{eqn: asymptotics, n, p}
	\sqrt \frac{2}{\pi} \cdot\frac{n} {\sqrt {p+1}}< \ener < \frac{\sqrt 5}{2}\cdot \frac{n}{\sqrt p}.
	\end{align}
	
\end{thm}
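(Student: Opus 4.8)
The plan is to attack the two inequalities by completely unrelated means: the lower bound is essentially free from the expectation argument already used in the fixed-$p$ regime, whereas the upper bound follows from a single, fully explicit configuration built out of repeated coordinate directions.

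For the lower bound I would reuse the left-hand inequality of \eqref{eqn: e(n,p)} in Theorem \ref{thm: asymptotics, fix p}. Its proof (bounding $\max_v$ below by $\bE_{v\sim\Unif(\Sp)}$ and invoking Lemma \ref{lem: distribution, uniform sphere}) never uses $n\to\infty$, so it holds for every pair $n,p$ and gives $\ener \ge n\,\frac{\Gamma(p/2)}{\sqrt{\pi}\,\Gamma((p+1)/2)}$. It then suffices to establish the purely analytic inequality
\[
\frac{\Gamma(p/2)}{\Gamma((p+1)/2)} > \sqrt{\tfrac{2}{p+1}}.
\]
This is a one-line consequence of strict log-convexity of $\Gamma$: applied to the equally spaced points $\tfrac p2,\tfrac{p+1}2,\tfrac{p+2}2$ it yields $\Gamma(\tfrac{p+1}2)^2 < \Gamma(\tfrac p2)\Gamma(\tfrac{p+2}2) = \tfrac p2\,\Gamma(\tfrac p2)^2$, hence $\frac{\Gamma(p/2)}{\Gamma((p+1)/2)} > \sqrt{2/p} > \sqrt{2/(p+1)}$. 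Dividing by $\sqrt{\pi}$ and multiplying by $n$ turns this into $\ener \ge n c_p > \sqrt{\tfrac2\pi}\cdot\frac{n}{\sqrt{p+1}}$, with the strictness coming from strict log-convexity.

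For the upper bound I would not minimize but simply exhibit one cheap configuration. Write $n = kp + r$ with $0\le r<p$, and place $m_j$ of the points at the standard basis vector $e_j$, with the multiplicities $m_1,\dots,m_p$ as equal as possible, i.e.\ $r$ of them equal to $k+1$ and $p-r$ equal to $k$ (so $\sum_j m_j = n$). For this $\ul u$ one has $\sum_i |u_i\cdot v| = \sum_{j=1}^p m_j|v_j|$, and since $|v_j|\propto m_j\ge 0$ is an admissible maximizer, the equality case of Cauchy--Schwarz gives
\[
\Enp(\ul u) = \max_{\|v\|=1}\sum_{j=1}^p m_j|v_j| = \sqrt{\textstyle\sum_{j=1}^p m_j^2}.
\]
A short computation gives the exact identity $\sum_j m_j^2 = r(k+1)^2+(p-r)k^2 = \frac{n^2}{p} + \frac{r(p-r)}{p}$. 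Using $r(p-r)\le p^2/4$ and, crucially, the hypothesis $n>p$ which forces $p < n^2/p$, this is strictly less than $\frac{n^2}{p} + \frac14\frac{n^2}{p} = \frac54\frac{n^2}{p}$, whence $\ener \le \Enp(\ul u) < \frac{\sqrt5}{2}\cdot\frac{n}{\sqrt p}$.

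I do not expect a genuine obstacle here: both halves are short once the right ingredients are identified. The only real content is recognizing the repeated-basis construction and seeing that the constant $\tfrac{\sqrt5}{2}$ is \emph{forced} by the remainder term $r(p-r)/p$ together with $n>p$ — the factor $\tfrac54=1+\tfrac14$ is precisely ``main term plus controlled remainder.'' Two small points deserve care: when $p\mid n$ the construction is an orthogonal tight frame and already yields the sharper $\ener\le n/\sqrt p$, so all the slack to $\tfrac{\sqrt5}{2}$ lives in the non-divisible case; and the strictness of the final inequality is legitimate exactly because $n>p$ is a strict inequality, making $p<n^2/p$ strict.
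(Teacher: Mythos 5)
Your proof is correct and follows essentially the same route as the paper: the lower bound comes from the expectation argument of Theorem \ref{thm: asymptotics, fix p} combined with the inequality $\Gamma(p/2)/\Gamma((p+1)/2) > \sqrt{2/(p+1)}$ (which the paper gets from Gautschi's inequality and you get, equivalently, from strict log-convexity of $\Gamma$), and the upper bound uses exactly the paper's repeated-standard-basis configuration with $\sum_j m_j^2 = r(k+1)^2+(p-r)k^2$ and the bound $r(p-r)\le p^2/4$ together with $n>p$. Your explicit remark that strictness of the $\tfrac{\sqrt5}{2}$ bound hinges on $p<n$ is a point the paper leaves implicit.
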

\begin{proof}
	We start with the lower bound in (\ref{eqn: asymptotics, n, p}). Theorem \ref{thm: asymptotics, fix p} shows $
	\ener \geq n \frac{\Gamma(p/2)}{\sqrt{\pi}   \Gamma((p+1)/2)} $ for any $n,p$. Using the  Gautschi's inequality 
	\begin{align*}
	\frac{\Gamma(x+1)}{\Gamma(x+s)} < (x+1)^{1-s} \qquad \text{if} ~~ x > 0, s\in (0,1)
	\end{align*}
	with $x = \frac{p-1}{2}$ and $s = \frac 12$, we have
	\begin{align*}
	n \frac{\Gamma(p/2)}{\sqrt{\pi}   \Gamma((p+1)/2)} > \frac{n}{\sqrt \pi} \sqrt {\frac{2}{p+1}},
	\end{align*}
	as desired. 
	
	For the upper bound, we write $ n = k p + r$ with $k\in \bN^+$ and $0\leq r\leq p-1$. For every $i \in \{1, \cdots, n\}$, we choose $u_i = e_{m_i} \in \Sp$ with $m_i = i \mod p$, where $e_k$ denotes the unit vector with all the entries zero except for a one on the $k$-th coordinate. In view of Lemma \ref{lem: discrete optimization}, the energy $\Enp (u_1, \cdots, u_n)$ can be calculated explicitly as:
	\begin{align*}
	\Enp (u_1, \cdots, u_n) &= \lVert \sum_{i=1}^n u_i\rVert = \sqrt {r (k+1)^2  + (p-r)  k^2 } = \sqrt{p k^2 + 2kr + r},
	\end{align*}
	which can be upper bounded by
	\begin{align*}
	\Enp (u_1, \cdots, u_n) &= \sqrt \frac{(pk+r)^2 + (p-r) r}{p} \leq \sqrt{ \frac{n^2 + \frac {p^2} 4}{p}} < \frac{\sqrt 5}{2} \cdot \frac n {\sqrt p},
	\end{align*}
	which concludes the proof.
\end{proof}

We conclude this section with  the following conjecture.
\begin{conj} \label{conj: average energy}
	Let $R_p(n) := \frac{\ener}{n}$ be the `average energy' of the minimax design on $\Sp$. For each fixed $p$, we conjecture: $R_p(n)$ is a non-increasing sequence with  $n$.
\end{conj}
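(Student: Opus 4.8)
The plan is to prove the equivalent \emph{marginal} form of the conjecture. Writing $d_n:=\mathcal E_{n+1,p}-\mathcal E_{n,p}$ for the cost of inserting the $(n+1)$-st point, a one-line manipulation shows that $R_p(n+1)\le R_p(n)$ holds if and only if $\mathcal E_{n+1,p}-\mathcal E_{n,p}\le \mathcal E_{n,p}/n=R_p(n)$; that is, the conjecture says exactly that adding one point never costs more than the current \emph{average} energy. Equivalently, since $\mathcal E_{n+1,p}$ is a minimum, it suffices to exhibit, for each optimal $n$-point configuration, some $(n+1)$-point configuration of energy at most $\tfrac{n+1}{n}\,\mathcal E_{n,p}$. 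I would first record the soft structural facts that make this plausible. Writing $\Phi(\mu):=\max_{v\in\Sp}\int_{\Sp}|u\cdot v|\,d\mu(u)$, the functional $\Phi$ is convex, positively $1$-homogeneous and rotation-invariant, and $R_p(n)=\min\{\Phi(\nu):\nu=\tfrac1n\sum_{i=1}^n\delta_{u_i}\}$ is the minimum of $\Phi$ over uniform $n$-atomic probability measures. Subadditivity of $\mathcal E_{\cdot,p}$ (take the union of two optimal configurations) together with Fekete's lemma gives $R_p(n)\to\inf_m R_p(m)=\tfrac{\Gamma(p/2)}{\sqrt\pi\,\Gamma((p+1)/2)}=:R_p^\infty$, the continuous minimum attained by the uniform measure, while rotation-averaging yields the \emph{replication} bound $R_p(kn)\le R_p(n)$; moreover Theorem \ref{thm: asymptotics, fix p} already gives $R_p(n)\ge R_p^\infty$, so the sequence decreases \emph{to} a known limit from above. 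As sanity checks the conjecture holds in the solved cases, $R_p(n)=1/\sqrt n$ for $n\le p$ (Proposition \ref{prop:case1}) and $R_2(n)=(n\sin(\tfrac{\pi}{2n}))^{-1}$ (Proposition \ref{prop:case2}), both strictly decreasing.

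The main attempt is a \emph{quantization of the scaled optimum}. Let $\nu_n=\tfrac1n\sum_i\delta_{u_i}$ be an optimal uniform $n$-atomic measure and let $\mu_\star$ be the uniform probability measure, so $\Phi(\mu_\star)=R_p^\infty$. Mixing toward uniform cannot increase $\Phi$: by convexity and $R_p^\infty\le R_p(n)$,
\[
\Phi\!\left(\tfrac{n}{n+1}\nu_n+\tfrac1{n+1}\mu_\star\right)\le \tfrac{n}{n+1}R_p(n)+\tfrac1{n+1}R_p^\infty\le R_p(n).
\]
This mixture is a probability measure with $n$ atoms of mass $\tfrac1{n+1}$ together with an absolutely continuous part of mass $\tfrac1{n+1}$, i.e.\ it carries exactly ``$n+1$ atoms' worth'' of mass at the right scale. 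If one could replace its continuous part by a single atom of mass $\tfrac1{n+1}$ without increasing $\Phi$, the result would be a uniform $(n+1)$-atomic measure of energy $\le R_p(n)$, proving the conjecture. A complementary (possibly too strong) route is the \emph{discrete concavity} $\mathcal E_{n+1,p}+\mathcal E_{n-1,p}\le 2\mathcal E_{n,p}$, which would force the increments $d_n$ to be non-increasing and hence $R_p(n)$, their running average, to decrease; I would attempt this by a simultaneous insertion/deletion coupling on an optimal $n$-configuration.

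The hard part -- and the reason this remains a conjecture -- is that $\Phi$ is a \emph{maximum} rather than an additive functional, so the moves above fail. Collapsing the continuous (uniform) part of the mixture into a single atom makes the measure peakier and can only be controlled for free by the convexity estimate, which replaces the blob by a Dirac and loses an additive $O(1)$ term (a single unit vector has $\Phi=1\gg R_p(n)$). The soft symmetric arguments one might hope for -- averaging $\Phi$ over the $n+1$ sub-configurations of an optimal $(n+1)$-configuration, or splitting the union of optimal $(n\mp1)$-configurations into two $n$-configurations -- only ever invoke $\max_v\sum_i\le\sum_i\max_v$, and therefore recover subadditivity and the raw monotonicity $\mathcal E_{n,p}\le\mathcal E_{n+1,p}$ but say nothing about the \emph{ratio}; the inequality needed to compare $R_p(n)$ with $R_p(n+1)$ points the other way and is false without extra structure. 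Finally, ``insert one point without moving the others'' breaks down precisely at the symmetric optimizers (the cube and triangular-pyramid configurations of Proposition \ref{prop: case3}): there the tying set $\{v:g_{\ul u}(v)=\mathcal E_{n,p}\}$ is large, no inserted unit vector is orthogonal to all tying directions, and the marginal cost is $\Theta(1)$, forcing a \emph{global} reorganization of the configuration.

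Consequently I expect the genuine obstacle to be a quantitative input about the optimizers themselves, uniform in $p$: either a stability estimate showing the uniform measure is a \emph{robust} continuous minimizer (so that the quantization error incurred above is $o(1/n)$ rather than $O(1)$), or an approximate-uniformity statement for the local minima extracted from the incidence relations of Lemma \ref{lem: local minimal}. Supplying such an estimate -- and not the soft convexity bookkeeping -- is where the real work lies, and is why the monotonicity is stated only as a conjecture.
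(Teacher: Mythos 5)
The statement you are addressing is explicitly an open conjecture in the paper: the authors write that they ``do not know how to prove it'' and offer only supporting evidence, namely (i) the exact values in the solved cases, (ii) the replication bound $\mathcal E_{2n,p}\le 2\mathcal E_{n,p}$, and (iii) the fact that the sequence starts at $1/\sqrt{n}$ for $n\le p$ and tends to the smaller limit $\frac{\Gamma(p/2)}{\sqrt{\pi}\Gamma((p+1)/2)}$. Your proposal does not prove the conjecture either, and you say so candidly, so there is no gap in the sense of a false or unjustified claim; but it also cannot be graded as a correct proof. The parts of your write-up that are actually verifiable --- subadditivity of $\mathcal E_{\cdot,p}$ via unions of configurations, Fekete's lemma identifying the limit with $\inf_m R_p(m)$, the replication bound $R_p(kn)\le R_p(n)$, and the monotonicity in the solved cases $R_p(n)=1/\sqrt n$ for $n\le p$ and $R_2(n)=(n\sin\frac{\pi}{2n})^{-1}$ --- are all correct and coincide with, or mildly sharpen, the evidence the paper itself cites. (Incidentally, your value $1/\sqrt n$ is the right one; the paper's remark that ``the first $p$ terms of $R_p(n)$ equal exactly $1$'' appears to be a slip, since $\mathcal E_{n,p}/n=\sqrt n/n$.)

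Where you go beyond the paper is in the two proposed attack routes, and both are left genuinely open at exactly the step that matters. In the mixture argument, the inequality $\Phi\bigl(\tfrac{n}{n+1}\nu_n+\tfrac{1}{n+1}\mu_\star\bigr)\le R_p(n)$ is correct (convexity of a supremum of linear functionals plus $\Phi(\mu_\star)=R_p^\infty\le R_p(n)$), but the conjecture requires a uniform $(n+1)$-atomic measure, and collapsing the diffuse $\tfrac{1}{n+1}$-mass into a single atom changes $\Phi$ by an amount you cannot control: a Dirac of mass $\tfrac1{n+1}$ contributes up to $\tfrac1{n+1}$ to $\Phi$ in its own direction, whereas the blob contributed only $\tfrac{1}{n+1}R_p^\infty$, so the naive replacement overshoots by $\Theta(1/n)$ --- the same order as the gap you are trying to close. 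The discrete-concavity route $\mathcal E_{n+1,p}+\mathcal E_{n-1,p}\le 2\mathcal E_{n,p}$ would indeed suffice (non-increasing increments have non-increasing running averages), but it is a strictly stronger statement than the conjecture and you give no mechanism for the ``insertion/deletion coupling,'' which runs into the same obstruction you correctly identify: at the symmetric local minima of Proposition \ref{prop: case3} the tying set of maximizing directions is large, so no single inserted point has small marginal cost. Your diagnosis of where the difficulty lies (a quantitative stability statement for the continuous minimizer, or approximate uniformity of the local minima via Lemma \ref{lem: local minimal}) is sensible and is a reasonable research direction, but as submitted this is a plan of attack rather than a proof of the statement.
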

There are several evidences supporting Conjecture \ref{conj: average energy}. Firstly, the first $p$ terms of $R_p(n)$ equals exactly $1$, while its limit equals $\frac{\Gamma(p/2)}{\sqrt{\pi}   \Gamma((p+1)/2)} < 1$. Secondly, it is not hard to show $R_p(2n) \leq R_p(n)$ for every $n$ as  $\mathcal E_{2n,p}$ is upper bounded by $2\ener$ (we can repeatly choose each vector in the minimax design of $\ener$ twice).  Lastly, all the existing non-asymptotic results in Section \ref{sec: exact designs} support our conjecture. When $p = 2$,  the results in Section \ref{subsec: case 2} confirms our conjecture. When $p= 3$, we  have $R_3(3) = \frac {\sqrt 3}{3} > R_3(4) = \frac{\sqrt 5}{4} $. 

\section{Applications}\label{sec: application}
\subsection{L1-Principal Component analysis}
Principal component analysis (PCA) is a widely-used technique in statistical analysis for dimension reduction. However, the standard L2-PCA approaches are known to suffer from  outliers. Let $X$ be a data matrix with $n$ observations and $p$ features, the first principal component (PC1) of the classical L2-PCA looks for a $p$-dimensional vector $w_{(1)}\in \Sp$ which maximizes the L2 norm:
\begin{equation}\label{eqn: L2PCA}
w_{(1)} := \argmax_{w\in \Sp} \lVert Xw \rVert_2 = \argmax_{w\in \Sp} w^\intercal X^\intercal X w.
\end{equation}

To increase the robustness of the PCA algorithm, one proposal is to maximize the L1 norm instead of the L2 norm, the first principal component of the L1-PCA can be similarly defined as:
\begin{equation}\label{eqn: L1PCA}
	v_{(1)} := \argmax_{v\in \Sp} \lVert Xv \rVert_1 = \argmax_{v\in \Sp} \sum_{i =1}^n \lvert v\cdot x_i \rvert,
\end{equation}
where $x_1, \cdots, x_n \in \bR^p$ are the rows of the data matrix $X$. It is clear that (\ref{eqn: L1PCA}) is precisely the new energy function we have defined in \ref{eqn: max energy}. L1-PCA is often  prefered  than L2-PCA when the dataset has outliers or corrupted observations. Applications include image reconstruction \cite{kwak2008principal}, robust subspace factorization \cite{ding2006r}, regression analysis \cite{mccoy2011two} and so on. Although immense progresses have been made in the study of L1-PCA methods, most of the existing results focus on proposing efficient and accurate algorithms for solving (\ref{eqn: L1PCA}), see \cite{mccoy2011two} \cite{kwak2008principal} \cite{markopoulos2017efficient} for examples. Our results are directly applicable to study the behavior of L1-PCA methods in the worst-case scenario. For example, suppose we have normalized all the observations such that $x_i \in \Sp$ for every $i$, then Theorem \ref{thm: asymptotics, fix p} and \ref{thm: asymptotics, arbitrary p} imply the following result directly.
\begin{prop}\label{prop: L1 PCA}
Let $X = [x_1^\intercal, x_2^\intercal, \cdots, x_n^\intercal]^\intercal \in \bR^{n\times p}$ be a normalized data matrix, then when $p$ is fixed and $n \rightarrow \infty$, we have
\begin{equation}\label{eqn: L1 PCA p fixed}
\frac{\Gamma(p/2)}{\sqrt{\pi}   \Gamma((p+1)/2)} \leq  \frac{\min\limits_{\{x_1, \cdots, x_n\}\in (\Sp)^n}\max\limits_{v\in \Sp}\lVert Xv \rVert_1 }{n}  \leq \frac{\Gamma(p/2)}{\sqrt{\pi}   \Gamma((p+1)/2)}  + C_p n^{-\frac 1p}.
\end{equation}

For arbitrary positive integers $n, p >0$, we have
\begin{equation}\label{eqn: L1 PCA p arbitrary}
\sqrt \frac{2}{\pi(p+1)}  \leq  \frac{\min\limits_{\{x_1, \cdots, x_n\}\in (\Sp)^n}\max\limits_{v\in \Sp}\lVert Xv \rVert_1 }{n}  \leq \sqrt \frac{5}{4p}.
\end{equation}

\end{prop}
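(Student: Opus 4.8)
The plan is to recognize that the inner maximization defining the L1-PCA objective is \emph{exactly} the energy functional $\Enp$, so that the proposition is merely a reformulation of the two asymptotic theorems already proved.

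First I would unwind the definition of the $\ell^1$ norm. For any data matrix $X$ with rows $x_1, \ldots, x_n$ we have $\lVert Xv \rVert_1 = \sum_{i=1}^n \lvert x_i \cdot v\rvert$, and hence
\[
\max_{v \in \Sp} \lVert Xv \rVert_1 = \max_{v\in \Sp}\sum_{i=1}^n \lvert x_i \cdot v\rvert = \Enp(x_1, \ldots, x_n).
\]
When the rows are normalized so that each $x_i \in \Sp$, taking the minimum of this quantity over all admissible data matrices is literally the definition of the minimal energy given in (\ref{eqn: minimal energy}), i.e.
\[
\min_{\{x_1, \ldots, x_n\}\in (\Sp)^n}\max_{v\in \Sp}\lVert Xv \rVert_1 = \ener.
\]

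With this identification in hand, the first display of the proposition follows immediately upon dividing the two-sided bound of Theorem \ref{thm: asymptotics, fix p} by $n$. For the second display I would divide the bound of Theorem \ref{thm: asymptotics, arbitrary p} by $n$ and simplify the constants: squaring shows $\sqrt{2/\pi}\cdot(p+1)^{-1/2} = \sqrt{2/(\pi(p+1))}$ and $(\sqrt 5/2)\cdot p^{-1/2} = \sqrt{5/(4p)}$, which are exactly the quantities appearing in (\ref{eqn: L1 PCA p arbitrary}). Note that the strict inequalities of Theorem \ref{thm: asymptotics, arbitrary p} may be weakened to the non-strict ones stated here without comment.

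There is essentially no obstacle in this argument; all of the mathematical content resides in the two asymptotic theorems, and the proposition is a transparent restatement of them in the language of robust PCA. The only point requiring a moment's care is to confirm that the normalization convention (each observation $x_i$ lies on $\Sp$) coincides with the domain $(\Sp)^n$ over which $\ener$ is defined, which it does by construction.
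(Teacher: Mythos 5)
Your proposal is correct and matches the paper's own treatment: the paper states that the proposition follows directly from Theorems \ref{thm: asymptotics, fix p} and \ref{thm: asymptotics, arbitrary p} after observing that $\max_{v\in \Sp}\lVert Xv\rVert_1 = \Enp(x_1,\ldots,x_n)$, which is exactly the identification you make before dividing the bounds by $n$.
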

The quantity $\frac{\max\limits_{v\in \Sp}\lVert Xv \rVert_1 }{n} \in [0,1]$ has natural statistical interpretations. It can be viewed as a measure for the proportion of the normalized data matrix $X$ explained by the first principal component, similar to the concept `Proportion of Variance Explained' (PVE) in L2-PCA.  In one extreme case (best case) where all the vectors lie on the same line, it is clear that the first principal component equals $x_1$ up to a sign flip. In this case we also have the ratio  $\frac{\max\limits_{v\in \Sp}\lVert Xv \rVert_1 }{n}$ equals $1$.  Proposition (\ref{prop: L1 PCA}) shows, under the worst-case scenario, the first principal component of L1-PCA can still explain $\Theta(\frac 1
{\sqrt p})$ of the original data. A natural follow-up problem is to consider the proportion of the original data explained by the next few principal components or ask for the number of principal components that contain a prefixed proportion of the data. We hope to answer these questions in our future works.

\subsection{Quasi-Monte Carlo for surface integrals on the unit sphere}

Numerical integration  is an important problem in many scientific areas. Given a bounded Riemannian manifold $M\subset \bR^p$ and an integration $I(f) := \int_{M} f(x) \sigma(dM)$ of interest, the standard Monte Carlo method samples independent and uniformly distributed points $x_1, \cdots, x_n$ on $M$, and estimate the integration by $
\hat{I} (f) := \frac{\sum\limits_{i = 1}^n f(x_i)}{n}$. By the Law of Large Numbers (LLN) and the Central Limit Theorem (CLT), the expected error of the Monte Carlo approximation is in the order of $O(n^{-1/2})$. 

When $M$ is taken to be the unit sphere $\Sp$, the spherical Quasi-Monte Carlo (QMC) seeks for $n$ points $\{u_1, \cdots, u_n\}$ on the unit sphere such that the error between the empirical average of $f(u_i)$ converges to $I(f)$ at a faster rate than the  baseline $O(n^{-1/2})$. It turns out that QMC designs are closely connected with the minimax spherical designs. Let $(u_1, u_2, \cdots, u_n)$ be the asymptotically minimax spherical design selected according to Corollary \ref{cor: asymptotic minimax design}. Then the following result from \cite{brauchart2014qmc} shows $(u_1, \cdots, u_n)$ are better than the Monte Carlo method under certain smoothness assumptions.
\begin{thm}[Theorem 24 in \cite{brauchart2014qmc}, reformulated] \label{thm: qmc design}
For  fixed $n,p$, let $(u_1, \cdots, u_n)\in (\Sp)^n$ be a set of points chosen as above. Let $\mathbb H^s(\Sp)$ be the Sobolev space with smoothness parameter $s$ of functions in $L^2(\Sp)$ (see \cite{evans:98} Chapter 5 for a detailed definition). Then the following holds for  $s \in (\frac p 2, \frac p2 + 1)$:
\begin{equation}\label{eqn:qmc convergence}
\frac{\beta'}{n^{s/p}} \leq \sqrt {\bE \bigg(\sup_{f\in\mathbb H^s(\Sp)} \big(\frac{\sum_{i=1}^n f(u_i)}{n} - I(f)\big)^2\bigg)} \leq \frac{\beta}{n^{s/p}}
\end{equation}
where $\beta$ and $\beta'$ are two positive constants depending on the $\mathbb H^s(\Sp)$ norm but not on $n$.
\end{thm}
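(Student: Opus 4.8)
The plan is to read the quantity under the square root in (\ref{eqn:qmc convergence}) as the expectation of the squared \emph{worst-case cubature error} of the equal-weight rule $Q_n(f):=\frac1n\sum_{i=1}^n f(u_i)$ over the unit ball of $\mathbb H^s(\Sp)$, and then to control that expectation through the reproducing kernel together with the diameter bound of Lemma \ref{lem: area-regular partition}. In the stated range $s>p/2$ is exactly the threshold making $\mathbb H^s(\Sp)$ a reproducing kernel Hilbert space with a continuous zonal kernel $K_s(x,y)=k_s(x\cdot y)$. Writing $\mu:=\sigma_p/\sigma_p(\Sp)$ for the normalized surface measure and $\tilde K_s:=K_s-\iint K_s\,d\mu\,d\mu$ for the centered kernel (so that $\int \tilde K_s(x,\cdot)\,d\mu=0$ by zonality), the classical reproducing-kernel identity for the error functional of $Q_n$ gives
\[
\sup_{f\in\mathbb H^s(\Sp)} \Big(\frac1n\sum_{i=1}^n f(u_i)-I(f)\Big)^2 \;=\; \frac1{n^2}\sum_{i,j=1}^n \tilde K_s(u_i,u_j),
\]
the supremum being over the unit ball. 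Hence it suffices to estimate $\bE\big[\frac1{n^2}\sum_{i,j}\tilde K_s(u_i,u_j)\big]$ for independent $u_i\sim\Unif(\cR_i)$.

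Next I would use the near-diagonal behavior of the Sobolev kernel. The range $s\in(p/2,p/2+1)$ is precisely the window in which the singular exponent $2s-p$ lies in $(0,2)$, and there $\tilde K_s$ has the Riesz-type expansion $\tilde K_s(x,y)=c_0-c_{p,s}\|x-y\|^{2s-p}+O(\|x-y\|^2)$ as $x\to y$, where $c_0:=\tilde K_s(x,x)>0$ and $c_{p,s}>0$ are constants independent of $x$ by zonality. Splitting the double sum into diagonal and off-diagonal parts and using independence and $\mu(\cR_i)=1/n$,
\[
\bE\Big[\frac1{n^2}\sum_{i,j}\tilde K_s(u_i,u_j)\Big] \;=\; \frac{c_0}{n}+\sum_{i\ne j}\iint_{\cR_i\times\cR_j}\tilde K_s\,d\mu\,d\mu .
\]
Because $\tilde K_s$ is centered, $\sum_{i,j}\iint_{\cR_i\times\cR_j}\tilde K_s=\iint\tilde K_s=0$, so the off-diagonal sum equals $-\sum_i\iint_{\cR_i\times\cR_i}\tilde K_s$; the constant part $c_0$ of this within-cell integral contributes $-c_0\sum_i\mu(\cR_i)^2=-c_0/n$, which cancels the diagonal term and leaves
\[
\bE\big[\mathrm{wce}^2\big] \;=\; c_{p,s}\sum_{i=1}^n \iint_{\cR_i\times\cR_i}\|x-y\|^{2s-p}\,d\mu(x)\,d\mu(y)+(\text{higher order}).
\]

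For the upper bound I would then plug in Lemma \ref{lem: area-regular partition}: each cell satisfies $\operatorname{diam}\cR_i\le C_p n^{-1/p}$ and $\mu(\cR_i)=1/n$, so $\|x-y\|^{2s-p}\le (C_p n^{-1/p})^{2s-p}$ on $\cR_i\times\cR_i$ and the displayed sum is bounded by $n\cdot n^{-2}\cdot n^{-(2s-p)/p}\asymp n^{-2s/p}$; taking square roots yields the asserted $O(n^{-s/p})$ rate. The matching lower bound is not special to this construction: for \emph{any} $n$ points the worst-case error over the unit ball of $\mathbb H^s(\Sp)$ is $\gtrsim n^{-s/p}$, obtained by testing $Q_n-I$ against a fixed smooth bump rescaled to width $\asymp n^{-1/p}$ and centered in a region free of the $u_i$ (such a region exists by a volume count); this bound passes to the expectation at once and gives the left inequality in (\ref{eqn:qmc convergence}).

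The crux is the second and third steps. The entire improvement over the plain Monte Carlo rate $n^{-1/2}$ hinges on the exact cancellation of the $1/n$ diagonal self-energy against the constant part of the within-cell integral, which in turn requires the precise near-diagonal (Riesz) expansion of the centered kernel, valid exactly because the Gegenbauer coefficients satisfy $\widehat{k_s}(\ell)\asymp\ell^{-2s}$ and $2s-p\in(0,2)$ on the stated window. Establishing this expansion and the positivity and $x$-independence of $c_0,c_{p,s}$ is the technical heart of the argument and is precisely the input recorded in \cite{brauchart2014qmc}; granting it, the remaining diameter estimates are elementary.
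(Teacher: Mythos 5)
The paper does not actually prove this statement: it is imported verbatim (reformulated) from Theorem~24 of \cite{brauchart2014qmc}, so there is no internal argument to compare against. Your reconstruction --- reading the bracketed quantity as the squared worst-case error of the equal-weight rule over the \emph{unit ball} of the reproducing kernel Hilbert space $\mathbb H^s(\Sp)$, passing to the centered zonal kernel, taking the expectation over independent $u_i\sim\Unif(\cR_i)$, exploiting $\sum_{i,j}\iint_{\cR_i\times\cR_j}\tilde K_s\,d\mu\,d\mu=0$ to cancel the $c_0/n$ diagonal term, and then feeding the near-diagonal Riesz expansion into the within-cell integrals --- is indeed the mechanism behind the cited result, and your upper bound is sound modulo the kernel expansion, which you correctly flag as the technical input to be taken from the reference. (You also rightly note that the supremum in \eqref{eqn:qmc convergence} only makes sense over the unit ball; and you consistently follow the paper's convention of treating $\Sp=S^{p-1}$ as $p$-dimensional in the exponents, window, and diameter bound, which is how the statement here is phrased even though the source works with the intrinsic dimension.)

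There is, however, a genuine gap in your lower bound. A \emph{single} bump of width $h\asymp n^{-1/p}$ and height $1$ in a point-free region has $I(f)\asymp h^{p}\asymp n^{-1}$ and $\|f\|_{\mathbb H^s}\asymp h^{p/2-s}$, so after normalizing it witnesses an error of only $h^{s+p/2}\asymp n^{-s/p-1/2}$ --- short of the claimed $\beta' n^{-s/p}$ by a factor of $n^{-1/2}$. The standard arbitrary-point-set lower bound instead sums $\asymp n$ disjoint bumps placed in the $\gtrsim n$ empty cells of a partition into $\asymp 2n$ cells, for which $I(f)\asymp 1$ while $\|f\|_{\mathbb H^s}^2\lesssim n\cdot h^{p-2s}\asymp n^{2s/p}$, yielding the correct $n^{-s/p}$. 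Alternatively, and more in the spirit of the rest of your argument, the exact identity you derive already gives the matching lower bound for \emph{this} construction: since $\mu(\cR_i)=1/n$, two independent uniform points of $\cR_i$ are at distance $\gtrsim n^{-1/p}$ (in the paper's convention) with probability bounded below, so $\sum_i\iint_{\cR_i\times\cR_i}\|x-y\|^{2s-p}\,d\mu\,d\mu\gtrsim n^{-2s/p}$, and no bump construction is needed. As written, though, the left inequality of \eqref{eqn:qmc convergence} is not established by your argument.
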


In addition to the theoretical results that consider the worst-case scenario, we also provide numerical evidence showing the QMC designs can be significantly more accurate than the Monte Carlo methods. 

\begin{eg}[QMC design for on the unit sphere $S^2$]
The  asymptotically minimax spherical design can be efficiently implemented on $S^2$. For simplicity, we assume $n = (k+1)^2$ for some positive integer $k$. We can evenly partition both the $z$-axis and the longitudes  into $k$ pieces. Then the sphere are naturally partitioned into $(k+1)^2$ pieces by the $k^2$ intersections. It can be directly verified that each piece has the same area, and each piece has diameter less than $4\pi n^{-\frac 12}$. See also   Figure \ref{fig: equal-area partition} for illustrations. Therefore, by randomly choosing points on each piece, we get an asymptotically minimax spherical design of $S^2$. 

\begin{center}
	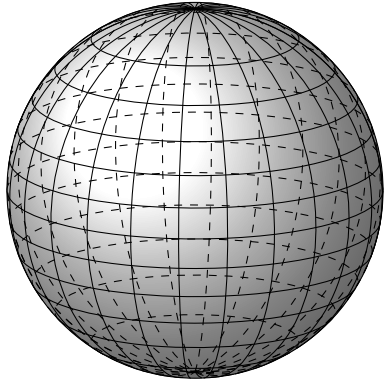
\begin{figure}[htbp]
	\begin{tikzpicture} 
	\def\R{2.5} 
	\def\angEl{15} 
	\filldraw[ball color= white] (0,0) circle (\R);
	\foreach \t in {-89, -56.44, -41.81, -30, -19.47, -9.59, 0, 
	9.59,19.47, 30, 41.81, 56.44,89} { \DrawLatitudeCircle[\R]{\t} }
	\foreach \t in {-5,-20,...,-185} { \DrawLongitudeCircle[\R]{\t} }
	\end{tikzpicture}
	\caption{An area-regular partition of $S^2$.}\label{fig: equal-area partition}
	\end{figure}
\end{center}

Here consider three functions, $f_1(\vec x) = x_1^2$,  $f_2(\vec x) = 1/ \lVert \vec x - (1,1,1) \rVert$, and $f_3(\vec x) = \exp(x_1 - x_2)$. The spherical surface integrals of each function can be evaluated analytically as below:
\begin{align*}
&\int_{S^2} f_1(x) \sigma(dx) = \frac{4\pi^2}{3},\\
&\int_{S^2} f_2(x) \sigma(dx) = \frac{4\pi^2}{\sqrt 3},\\
&\int_{S^2} f_3(x) \sigma(dx) = 2^{\frac 32} \pi\sinh(\sqrt 2).\\
\end{align*}

 Therefore, we  estimate each integral using both the Monte Carlo and the QMC methods and compare their performances. We choose  $n\in [10^4, 10^6]$, and implement both the Monte Carlo method and the Quasi-Monte Carlo method, each is repeated $50$ times for every fixed $n$. The Root Mean Square Error (RMSE)  of both methods are plotted below.  

\begin{center}
\begin{figure}[htbp]
\includegraphics[scale= 0.33]{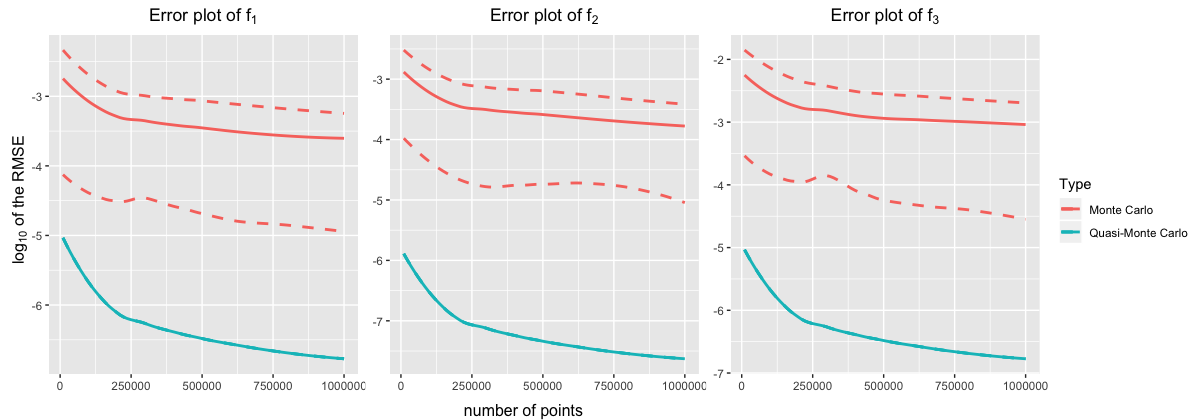}
\caption{Error plots of the Monte Carlo and the Quasi-Monte Carlo method for estimating the integral of $f_1, f_2, f_3$ on $S^2$. The horizontal axis stands for the number of points used for estimation. The vertical axis stands for the logarithm of the root mean square error under base $10$. Red and green solid lines correspond to the Monte Carlo and Quasi-Monte Carlo method, respectively. Red dotted lines are the $90\%$ confidence intervals of the Monte Carlo estimations based on $50$  independently repeated experiments.}\label{fig: mc vs qmc}
\end{figure}

\end{center}
Figure \ref{fig: mc vs qmc} suggests two important advantages of the QMC method, in contrast to the Monte Carlo method. Firstly, for all three test functions, QMC method offers several  orders of magnitude better accuracy than the Monte Carlo method. Secondly, QMC method converges to ground truth at an order of magnitude faster than the Monte Carlo method. For all  three test functions, when $n$ is increasing from $n_1 = 10^4$ to $n_2 = 100  n_1 = 10^6$, the error of the QMC method decreases to $\sim 1\%$ of the original, while the error using the Monte Carlo method only decreases to $\sim 10\%$ of the original.
\end{eg}

Both theoretical and empirical studies have shown promising results of the QMC method, but many challenges remain. Computationally, it is  unknown to us how to design efficient and implementable QMC designs when $p \gg 3$. Mathematically, Theorem \ref{thm: qmc design} concerns the convergence rate of a special asymptotically spherical minimax design. We do not know whether the exact spherical minimax design can achieve better convergence bounds than (\ref{eqn:qmc convergence}) or not.

\newpage
\bibliographystyle{amsplain}
\bibliography{refs}{}
\end{document}